\documentclass[11pt,reqno]{amsart}

\usepackage{hyperref}
\usepackage{graphicx}
\usepackage{color}
\usepackage[all]{xy}
\usepackage{amsfonts,amssymb}
\usepackage{bbm} % \mathbbm{1} for identity matrix etc.

\usepackage{mathrsfs}

\usepackage{pdfsync}
\usepackage{marginnote}
\usepackage{a4wide}
\usepackage{verbatim}

%\CompileMatrices

%\setlength{\oddsidemargin}{1cm}
%\setlength{\evensidemargin}{.85cm}

%\pagestyle{myheadings}

\newtheorem{neu}{}[section]
\newtheorem{Cor}[neu]{Corollary}
\newtheorem*{Cor*}{Corollary}
\newtheorem{Thm}[neu]{Theorem}
\newtheorem*{Thm*}{Theorem}
\newtheorem*{Observation*}{Observation}
\newtheorem{Prop}[neu]{Proposition}
\newtheorem*{Prop*}{Proposition}
\theoremstyle{definition}
\newtheorem{Lemma}[neu]{Lemma}
\newtheorem*{Rmk*}{Remark}
\newtheorem{Rmk}[neu]{Remark}

\newtheorem*{Ex*}{Example}

\newtheorem*{Qu*}{Question}
\newtheorem{Claim}{Claim}
\newtheorem{Def}[neu]{Definition}
\newtheorem{Conv}[neu]{Convention}

\newcommand{\N}{\mathbb{N}}

\newcommand{\Z}{\mathbb{Z}}
\newcommand{\R}{\mathbb{R}}
\newcommand{\C}{\mathbb{C}}
\newcommand{\CP}{\C\mathrm{P}}

\newcommand{\pf}{\longrightarrow}

\newcommand{\CZ}{\mu_{\mathrm{CZ}}}

\newcommand{\Morse}{\mu_{\mathrm{Morse}}}
   %%%  delbar_J

\newcommand{\om}{\omega}
\newcommand{\Om}{\Omega}

   %%% equivariantes Integral über \mathcal{M}(M)
   %%% equivariantes Integral über \mathcal{M}(F)
  %%% equivariante Eulerklasse von \mathcal{M}(F)
  %%% equivariante Eulerklasse von F

\renewcommand{\O}{\mathcal{O}}

\newcommand{\A}{\mathscr{A}}
\renewcommand{\a}{\mathfrak{a}}

\newcommand{\U}{\mathcal{U}}

\newcommand{\M}{\mathcal{M}}
\newcommand{\Mh}{\widehat{\mathcal{M}}}
\newcommand{\Nh}{\widehat{\mathcal{N}}}

\renewcommand{\j}{\mathfrak{j}}
\newcommand{\B}{\mathcal{B}}

\renewcommand{\L}{\mathscr{L}}
\newcommand{\Lt}{\widetilde{\mathscr{L}}}
\renewcommand{\H}{\mathrm{H}}

\newcommand{\RFH}{\mathrm{RFH}}
\newcommand{\SH}{\mathrm{SH}}
\newcommand{\RFC}{\mathrm{RFC}}

\newcommand{\QH}{\mathrm{QH}}
\newcommand{\Crit}{\mathrm{Crit}}

\newcommand{\m}{\mathfrak{m}}

\newcommand{\beq}{\begin{equation}}
\newcommand{\beqn}{\begin{equation}\nonumber}
\newcommand{\eeq}{\end{equation}}

\newcommand{\bea}{\begin{equation}\begin{aligned}}
\newcommand{\bean}{\begin{equation}\begin{aligned}\nonumber}
\newcommand{\eea}{\end{aligned}\end{equation}}

\numberwithin{equation}{section}

\newcommand{\p}{\partial}

\renewcommand{\c}{\mathfrak{C}}

\begin{document}
%%%%%%%%%%%%%%%%%%%%%%%%%%%
\title[Vanishing of RFH on negative line bundles]{Vanishing of Rabinowitz Floer homology on negative line bundles}
\author{Peter Albers}
\author{Jungsoo Kang}
\address{
	Peter Albers, Jungsoo Kang\\ 
	Mathematisches Institut\\
	Westf\"alische Wilhelms-Universit\"at M\"unster}
\email{peter.albers@wwu.de, jungsoo.kang@me.com}
\keywords{}
%\date{\today}
%\subjclass[2000]{}
\begin{abstract}
Following \cite{Frauenfelder_Habilitation,Albers_Frauenfelder_Bubbles_and_Onis} we construct Rabinowitz Floer homology for negative line bundles over symplectic manifolds and prove a vanishing result.

In \cite{Ritter_Floer_theory_for_negative_line_bundles_via_Gromov_Witten_invariants} Ritter showed  that symplectic homology of these spaces does not vanish, in general. Thus, the theorem $\mathrm{SH}=0\Leftrightarrow\RFH=0$, \cite{Ritter_Topological_quantum_field_theory_structure_on_symplectic_cohomology}, does \emph{not} extend beyond the symplectically aspherical situation. We give a conjectural explanation in terms of the Cieliebak-Frauenfelder-Oancea long exact sequence \cite{Cieliebak_Frauenfelder_Oancea_Rabinowitz_Floer_homology_and_symplectic_homology}.
\end{abstract}
\maketitle

%\tableofcontents
{\let\thefootnote\relax\footnotetext{\today}}

\section{Introduction}

Negative line bundles give rise to a rather special class of contact manifolds which nevertheless contains many interesting examples. They arise at many places in modern contact and symplectic geometry such as Givental's nonlinear Maslov index \cite{Givental_The_nonlinear_Maslov_index} and more generally contact rigidity \cite{Eliashberg_Polterovich_Partially_ordered_groups_and_geometry_of_contact_transformations,Sandon_Equivariant_homology_for_generating_functions_and_orderability_of_lens_spaces,Borman_Quasi_states__quasi_morphisms_and_the_moment_map,Borman_Zapolsky_Quasimorphisms_on_contactomorphism_groups_and_contac_rigidity} etc.

Let us be more specific. We choose a closed connected symplectic manifold  $(M,\om)$ with integral symplectic form $[\om]\in\H^2(M,\Z)$. We denote by $\wp:\Sigma\to M$ the principal $S^1$-bundle and by $\wp:E\to M$ the associated complex line bundle with first Chern class $c_1^E=-[\om]$. We refer to these bundles as {\em negative line bundles}. There exists an $S^1$-invariant 1-form $\alpha$ on $\Sigma$, and hence $E\setminus M$, with the property
\beq
d\alpha=\wp^*\om
\eeq
which is a contact form on $\Sigma$. For more details we refer to \cite[Section 7.2]{Geiges_book}. If we denote by $r$ the radial coordinate on $E$ then the 2-form
\beq
\Om:=d\big(\pi r^2\alpha\big)+\wp^*\om=2\pi rdr\wedge\alpha+\big(\pi r^2+1\big)\wp^*\om
\eeq
is a symplectic form on $E$.  Throughout this article we make the assumption that $(E,\Om)$ is semi-positive, see \cite[Definition 6.4.1]{McDuff_Salamon_J_holomorphic_curves_and_symplectic_topology} and page \pageref{test} for an equivalent formulation. We denote by $\Sigma_\tau$ the circle subbundle of $E$ of radius $r=\sqrt{\tau/\pi}$.

\begin{Thm}\label{thm:RFH_well_defined}
The Rabinowitz Floer homology $\RFH(\Sigma_\tau,E)$ is well-defined for all $\tau>0$.
\end{Thm}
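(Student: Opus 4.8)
The plan is to follow the Rabinowitz Floer package of Cieliebak--Frauenfelder, adapted to the non-exact, non-compact geometry of $(E,\Om)$ as in \cite{Frauenfelder_Habilitation, Albers_Frauenfelder_Bubbles_and_Onis}. First I would fix a defining Hamiltonian $H\colon E\to\R$ for the hypersurface $\Sigma=H^{-1}(0)$, cut off to be constant outside a compact collar of $\Sigma$, and form the Rabinowitz action functional $\A^H(v,\eta)=\int v^*(\pi r^2\alpha)-\eta\int_0^1 H(v(t))\,dt$ on (a suitable cover of) the contractible loop space $\mathcal{L}E$ times the multiplier $\eta\in\R$. Because $c_1^E=-[\om]\neq 0$ and $\Om$ is not exact, $\A^H$ is genuinely multivalued; I would therefore work over the Novikov ring $\Nov$ associated to the relevant period group $\Gamma$, and grade by a transverse Conley--Zehnder index $\CZ$ on this cover. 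A direct computation identifies $\Crit\A^H$ with the constant loops (the locus $\eta=0$) together with the closed Reeb orbits of $\alpha$ on $\Sigma$ paired with their periods $\eta$, so that after a small Morse--Bott or time-dependent perturbation the generators of the chain complex $\RFC(\Sigma,E)$ are isolated.

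The boundary operator counts rigid negative gradient flow lines of $\A^H$ with respect to a metric induced by an $\Om$-compatible almost complex structure $J$ which I would take of SFT/contact type (i.e.\ cylindrical in the radial coordinate $r$) outside a compact set. The two decisive \emph{a priori} estimates are, first, the uniform $L^\infty$ bound on the Lagrange multiplier $\eta$ along flow lines — the analogue of the fundamental lemma of Cieliebak--Frauenfelder, which here rests on the controlled dynamics of the Reeb flow on the circle bundle $\Sigma$ — and, second, a $C^0$ confinement statement: every flow line of bounded energy stays inside a fixed compact subset of $E\setminus M$. Boundedness at infinity follows from a maximum principle for the subharmonic function $\pi r^2$ forced by the cylindrical $J$; the lower bound, keeping trajectories away from the zero section $M$ where $\alpha$ degenerates, is the more delicate point and is exactly where the analysis of \cite{Albers_Frauenfelder_Bubbles_and_Onis} enters.

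With these bounds in hand, Gromov--Floer compactness yields that sequences of flow lines converge to broken trajectories possibly together with bubbled $J$-holomorphic spheres and fibre planes (the ``Onis'' of \cite{Albers_Frauenfelder_Bubbles_and_Onis}). Here the semi-positivity hypothesis on $(E,\Om)$ does the essential work: it forbids bubbling in codimension $\le 1$, so that the zero- and one-dimensional components of the moduli spaces $\M$ are compact manifolds (with boundary) after generic perturbation of $(H,J)$; transversality itself is achieved by the usual Sard--Smale argument on a Banach manifold of perturbations compatible with the confinement. Counting rigid flow lines over $\Nov$ then gives $\partial$, and the description of the boundary of the one-dimensional moduli spaces as broken trajectories gives $\partial^2=0$.

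Finally, well-definedness means independence of the auxiliary data. I would construct continuation homomorphisms between the complexes built from two choices $(H_0,J_0)$ and $(H_1,J_1)$ by interpolating and counting solutions of the associated $s$-dependent equation, checking that the same $L^\infty$- and $C^0$-estimates persist uniformly in $s$, so that the continuation maps are defined and are chain homotopy equivalences. The main obstacle throughout is not any single estimate in isolation but their compatibility: one must simultaneously run the maximum principle at infinity, keep trajectories off the zero section, and invoke semi-positivity to control sphere and fibre bubbling, all with constants uniform over the moduli spaces and over continuation homotopies. This intertwined compactness analysis is the technical core inherited from \cite{Frauenfelder_Habilitation, Albers_Frauenfelder_Bubbles_and_Onis}.
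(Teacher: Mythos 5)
Your overall architecture (action functional on a cover of the loop space, Novikov coefficients, a uniform bound on the Lagrange multiplier, a $C^0$ confinement at infinity, semi-positivity to exclude bubbling in low codimension, continuation maps for invariance) matches the paper's, but your choice of almost complex structure opens a genuine gap that the paper's proof is specifically designed to close. You take $J$ to be a generic $\Om$-compatible structure, cylindrical at infinity, and then invoke ``the usual Sard--Smale argument'' for transversality. The paper instead works exclusively with upper-triangular structures $J_t=\left(\begin{smallmatrix} i & B_t\\ 0 & j_t\end{smallmatrix}\right)$ with respect to the splitting $TE\cong V\oplus H$, precisely so that $\wp$ is $(J,j)$-holomorphic and Floer cylinders of $\A_f^\tau$ project to Floer cylinders of $\a_f$ on $M$. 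This projection property is what makes Frauenfelder's fundamental lemma --- and hence the uniform $C^0$ bound on the multiplier $\eta$ --- go through in this non-exact setting; the paper explicitly remarks that it is unclear how to obtain this bound outside that class. So with your generic cylindrical $J$ the multiplier bound is unproven, while with the restricted class your transversality argument fails, since these $J$ are merely tame (not compatible) and form far too small a family for the standard argument. The technical core of Theorem \ref{thm:RFH_well_defined} is the resolution of exactly this tension: a bespoke transversality result (Proposition \ref{prop:regular_J_is_generic}) showing that generic perturbations of the off-diagonal block $B$ suffice. Its proof splits the linearization into horizontal directions (already surjective because $j\in\j_{\mathrm{reg}}$ and the projected cylinder solves the Floer equation downstairs) and vertical ones, where one needs the notion of somewhere \emph{horizontally} injective points (Lemmas \ref{lem:simple_implies_horizontally_injective} and \ref{lem:B_hats_for_transversality}) to run Sard--Smale, together with the separate observation that trajectories contained in a single fiber are vortices and automatically transverse. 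None of this appears in your proposal.

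Two smaller points. First, your ``$C^0$ confinement away from the zero section'' is neither needed nor true: $\Om=2\pi r\,dr\wedge\alpha+(\pi r^2+1)\wp^*\om$ is a smooth symplectic form on all of $E$ including $M$, the Hamiltonian is $\mu_\tau=\pi r^2-\tau$ (not cut off), and trajectories and bubbles do cross the zero section --- only the upper bound on $r$ via the maximum principle is required. Second, the Morse--Bott degeneracy is not removed by the perturbation $F=(\pi r^2+1)f\circ\wp$: a residual $S^1$-symmetry remains, so critical points come in circles, and one must additionally choose a perfect Morse function $h$ on $\Crit(\A_f^\tau)$ and define the moduli spaces with stable/unstable manifold conditions \`a la Bourgeois--Oancea; your claim that the generators become isolated after a small perturbation skips this step.
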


In many situations we are able to prove the following vanishing result.

\begin{Thm}\label{thm:main}
We assume that one of the following is satisfied.
\begin{enumerate}
\item $(M,\om)$ is symplectically aspherical: $\om\big(\pi_2(M)\big)=0$.
\item There exists a constant $c$ such that
\beq\label{eqn:assumption_c_1_TM=c_om}
c_1^{TM}=c\om:\pi_2(M)\to\Z
\eeq
and $2c\nu\leq -\dim M$ holds, where $\nu\in\Z_{>0}$ is defined by $\om\big(\pi_2(M)\big)=\nu\Z$. 
\item The equality \eqref{eqn:assumption_c_1_TM=c_om} holds with $c\geq 1$. 

\end{enumerate}
Then the Rabinowitz Floer homology of $(\Sigma_\tau,E)$ vanishes,
\beq
\RFH(\Sigma_\tau,E)=0,
\eeq  
for all $\tau>0$ in the case of (1), (2), and (3) with $c=1$. The same assertion holds in the case of (3) provided $\tau<\frac{1}{c-1}$. 
\end{Thm}
In the setting of the above theorem, $(E,\Om)$ is semi-positive. Then condition $\tau<\frac{1}{c-1}$ in the case of (3) is indispensable, see Remark \ref{rmk}.(6) below.

Contact manifolds such as spheres, projective spaces etc.~come from negative line bundles. In fact, the Boothby-Wang theorem \cite[Theorem 7.2.5]{Geiges_book} characterizes these contact manifolds as those whose Reeb flow is periodic with all Reeb orbits having the same minimal period. On the sphere this corresponds to the Hopf fibration. 

\begin{Rmk}\label{rmk} $ $
\begin{enumerate}
\item If $c=0$, i.e.~$c_1^{TM}\big(\pi_2(M)\big)=0$, then $c_1^{TE}=-\wp^*\om:\pi_2(E)\to\Z$. Therefore, $(E,\Om)$ is semi-positive if and only if $\nu\geq\tfrac12\dim M-1$ or $\om\big(\pi_2(M)\big)=0$, see Lemma \ref{lem:semi_positive}. If $(E,\Om)$ is semi-positive then $\RFH(\Sigma,E)=0$ still holds.

%If $(M,\om)$ is symplectically aspherical, then $(E,\Om)$ is, of course, semi-positive. Moreover, if we assume $c_1^{TM}=c\om:\pi_2(M)\to\Z$ then $(E,\Om)$ is already semi-positive if $c\geq1$ or $|c-1|\nu\geq\tfrac12\dim E-2$, see Lemma \ref{lem:semi_positive}. 

\item It is worth pointing out that $\Sigma$ is \emph{not} displaceable inside $E$ since the zero-section $M\subset E$ is not even topologically displaceable. To our knowledge this is the first vanishing result for $\RFH$ result which is \emph{not} due to a displaceability phenomenon, see also Ritter \cite[Remark on p.~1044]{Ritter_Floer_theory_for_negative_line_bundles_via_Gromov_Witten_invariants}. 

\item There are very few \emph{direct} computations of $\RFH$. To our knowledge, the vanishing result in the  displaceable case ~\cite{Cieliebak_Frauenfelder_Restrictions_to_displaceable_exact_contact_embeddings}, the computation for cotangent bundles  ~\cite{Abbo_Schwarz_Estimates_and_computations_in_RFH}, and the computation for  Brieskorn spheres ~\cite{Fauck_RFH_on_Brieskorn_spheres} are the only ones. The long exact sequence, \cite{Cieliebak_Frauenfelder_Oancea_Rabinowitz_Floer_homology_and_symplectic_homology} leads often to computational results if the symplectic homology and the connecting maps are known. The latter rarely happens, though. 

\item Rabinowitz Floer homology, first constructed by Cieliebak and Frauenfelder in \cite{Cieliebak_Frauenfelder_Restrictions_to_displaceable_exact_contact_embeddings}, is an invariant of contact type hypersurfaces in symplectic manifolds. It turned out to be an efficient tool for studying questions in symplectic topology and dynamics, see \cite{Albers_Frauenfelder_RFH_Survey}. 

In \cite{Frauenfelder_Habilitation} Frauenfelder studied the Rabinowitz Floer homology of negative line bundles under the additional assumption of the line bundle being very negative. The implication of the latter is the generic absence of holomorphic spheres. Amongst many other things he established $C^\infty_{loc}$-compactness results, cf.~\cite[Theorem B]{Frauenfelder_Habilitation}. Even though Rabinowitz Floer homology is not fully constructed in \cite{Frauenfelder_Habilitation} all ingredients are basically contained therein, see also \cite{Albers_Frauenfelder_Bubbles_and_Onis}.  

The purpose of this article is to complete and extend the construction of Rabinowitz Floer homology to negative line bundles in the presence of holomorphic spheres under a semi-positivity assumption. In particular, we prove a transversality result made necessary due to the use of a rather restricted class of almost complex structures. 

It is worth pointing out that this is the first instance where Rabinowitz Floer homology is constructed in the presence of holomorphic spheres. Holomorphic spheres are a source for interesting symplectic topology and big technical problems at the same time. The latter is the reason we require semi-positivity. 

\item The main new contribution is Theorem \ref{thm:main}: Rabinowitz Floer homology vanishes in many cases. This should be contrasted with Ritter's result that symplectic homology does not necessarily vanishes, see \cite{Ritter_Floer_theory_for_negative_line_bundles_via_Gromov_Witten_invariants}. Thus, the theorem $\mathrm{SH}=0\Leftrightarrow\RFH=0$, \cite{Ritter_Topological_quantum_field_theory_structure_on_symplectic_cohomology}, does \emph{not} extend beyond the symplectically aspherical situation. We give a conjectural explanation of this in section \ref{sec:conj_explanation} below.

For very negative line bundles Ritter in \cite[Theorem 8]{Ritter_Floer_theory_for_negative_line_bundles_via_Gromov_Witten_invariants} proved vanishing of symplectic homology. If we assume in addition that  $c_1^{TM}=c\om:\pi_2(M)\to\Z$ the Rabinowitz Floer homology vanishes according to Theorem \ref{thm:main} as well. The conjectural picture from section \ref{sec:conj_explanation} nicely relates these results.
\item (\textbf{Updated on December 5, 2025}) In the first version of this paper, the assumption $\tau<\frac{1}{c-1}$ in case (3) was not made in the statement although this was crucially used in the proof and explicitly stated in Lemma \ref{lem:c_great_equal_0_implies_CZ_downstairs_bounded_from_above}. In fact, the assertion $\RFH(\Sigma_\tau,E)=0$ is false without this assumption.	 We refer the reader \cite{AK23}, a sequel to this paper, see in particular Remark 1.8.(a) therein. We also refer to \cite{Ven18,Ven21} for the relation between Rabinowitz Floer homology and (completed) symplectic homology, which clarifies our conjectural explanation in section \ref{sec:conj_explanation}.
\end{enumerate}
\end{Rmk}

\subsection*{Acknowledgments} 
We thank Urs Frauenfelder for illuminating discussion on the present article. PA is supported by SFB 878. JK is supported by DFG grant KA 4010/1-1.

\section{Rabinowitz Floer homology and Hamiltonian Floer homology}\label{sec:def_of_RFH}

\subsection{Preliminaries}
Let $\wp:E\to M$ be as described above. We denote by $\L(E)$ the component of contractible loops of the free loop space of $E$. Moreover, we denote by $\Lt(E)$ the covering space of $\L(E)$ with deck transformations given by 
\beq
\Gamma_E:=\frac{\pi_2(E)}{\ker\Om\cap\ker c_1^{TE}}\;.
\eeq
We write elements in $\Lt(E)$ as $[u,\bar u]$, where $u:S^1\to E$ and $\bar u:D^2\to E$ is a capping disk for $u$, i.e.~$\bar u|_{S^1}=u$. Moreover, pairs $(u,\bar u)$ and $(v,\bar v)$ are equivalent if $u=v$ and $\Om(-\bar u\#\bar v)=c_1^{TE}(-\bar u\#\bar v)=0$, where $-\bar u\#\bar v$ is the sphere formed by $\bar u$ with orientation reversed and $\bar v$. The expression $[u,\bar u]$ denotes the corresponding equivalence class. Analogously we define
\beq
\Gamma_M:=\frac{\pi_2(M)}{\ker\om\cap\ker c_1^{TM}}\;,
\eeq
$\L(M)$ and $\Lt(M)$. By definition of $E$ we have
\beq
c_1^{TE}=\wp^*\big(c_1^{TM}+c_1^E\big)\;.
\eeq

\begin{Rmk}\label{rmk:Gamma_E=Gamma_M}
We point out that under assumption \eqref{eqn:assumption_c_1_TM=c_om}, i.e.~$c_1^{TM}=c\om:\pi_2(M)\to\Z
$, we have $\ker\om\cap\ker c_1^{TM}=\ker \om$ and $\ker\Om\cap\ker c_1^{TE}=\ker \Om$. If we instead assume $\om\big(\pi_2(M)\big)=0$ then $c_1^{TE}=\wp^* c_1^{TM}:\pi_2(E)\to\Z$. In particular, since $\pi_2(E)\cong\pi_2(M)$ via $\wp_*$, we can identify in both cases $\Gamma_E\cong\Gamma_M$.
\end{Rmk}

For $\tau>0$ we denote by $\mu_\tau:E\to\R$ the function $\mu_\tau=\pi  r^2-\tau$ where as above $r$ denotes the radial coordinate on $E$. We point out that along $\Sigma_\tau:=\{\mu_\tau=0\}$ the Hamiltonian vector field $X_{\mu_\tau}$ of $\mu_\tau$ agrees with the Reeb vector field $R$ associated to the contact form $\alpha$. In particular, we use the convention $\Om(X_{\mu_\tau},\cdot)=-d\mu_\tau$. The Rabinowitz action functional $\A^\tau$ is defined as
\bea
\A^\tau:\Lt(E)\times\R&\to\R\\
\big([u,\bar u],\eta\big)&\mapsto \int_{D^2}\bar u^*\Om-\eta\int_0^1 \mu_\tau\big(u(t)\big) dt \;.
\eea
In the article \cite{Cieliebak_Frauenfelder_Restrictions_to_displaceable_exact_contact_embeddings} Cieliebak and Frauenfelder developed a Floer theory for this functional in a slightly simpler set-up and for non-degenerate contact forms. The current set-up has been developed and studied Frauenfelder in \cite{Frauenfelder_Habilitation} for very negative line bundles, see also \cite{Albers_Frauenfelder_Bubbles_and_Onis}. 

In our setting the contact form is Morse-Bott non-degenerate. A general Morse-Bott approach to Rabinowitz Floer homology is currently not available in the literature. Instead of perturbing the contact form we choose the following perturbation.  We fix $f:M\to\R$ and set 
\beq
F:=\big(\pi r^2+1\big)f\circ \wp:E\to\R\;.
\eeq
The perturbed Rabinowitz action functional is 
\bea\label{eqn:def_of_RFH_functional}
\A_f^\tau:\Lt(E)\times\R&\to\R\\
\big([u,\bar u],\eta\big)&\mapsto \int_{D^2}\bar u^*\Om-\eta\int_0^1 \mu_\tau\big(u(t)\big) dt-\int_0^1F\big(u(t)\big)dt \;.
\eea
Critical points of $\A_{f=0}^\tau$ correspond to capped Reeb orbits traversed in forward and backward direction and, in addition, to constant loops contained in $\Sigma_\tau$ together with cappings. In Lemma \ref{lem:crit_A_f} we show that for $C^2$-small Morse functions $f$ the critical points of $\A_f^\tau$ correspond to capped Reeb orbits which lie via $\wp:E\to M$ over $\Crit(f)$. The functional $\A_f^\tau$ is still Morse-Bott due to the remaining $S^1$-symmetry. This can be dealt with as in the article by Bourgeois-Oancea \cite{Bourgeois_Oancea_Symplectic_homology_autonomous_Hamiltonians_and_Mores_Bott_moduli_spaces}. 

\begin{Rmk}\label{rmk:splitting_TE=V+H}
We split the tangent bundle $TE\cong V\oplus H$ in vertical resp.~horizontal subspaces $V$ resp.~$H$ according to $\alpha$. In particular, $\wp_*:(H,d\alpha)\stackrel{\cong}{\pf}(TM,\om)$ is an isomorphism of symplectic vector bundles and $V$ is spanned by the Reeb vector field $R$ and the radial vector field $r\partial_r$.  
\end{Rmk}

\begin{Lemma}\label{lem:crit_A_f}
If $f:M\to\R$ is $C^2$-small then $\big([u,\bar u],\eta\big)$ is a critical point of $\A_f^\tau$ if and only if the following equations are satisfied.
\begin{equation}\label{eq:crit_eq}
\left\{
\begin{aligned}
&\pi r(u)^2=\tau\\
&q:=\wp(u)\in\Crit(f)\\
&\dot{u}=\big(\eta +f(q)\big)R(u)\\
\end{aligned} 
\right.
\end{equation}
In particular, necessarily $\eta+f(q)\in\Z$ and $u\subset\Sigma_\tau$ is a $(\eta+f(q))$-fold cover of the underlying simple periodic  orbit.
\end{Lemma}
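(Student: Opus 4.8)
The plan is to compute the two partial differentials of $\A_f^\tau$ at a point $\big([u,\bar u],\eta\big)$ and read off the Euler--Lagrange equations. First I would vary the Lagrange multiplier $\eta$: it enters linearly, so $\partial_\eta\A_f^\tau=-\int_0^1\mu_\tau(u(t))\,dt$, and vanishing in this direction is exactly the constraint $\int_0^1\mu_\tau(u)\,dt=0$. Next I would vary the loop $u$ together with its capping. Using the standard first variation of the symplectic area $[u,\bar u]\mapsto\int_{D^2}\bar u^*\Om$, the sign convention $\Om(X_{\mu_\tau},\cdot)=-d\mu_\tau$, and the analogous relation $\Om(X_F,\cdot)=-dF$ for the Hamiltonian vector field of $F$, the vanishing of the loop differential becomes, after integration by parts and by nondegeneracy of $\Om$, the Rabinowitz-type equation
\beq
\dot u=\eta\,X_{\mu_\tau}(u)+X_F(u)\;.
\eeq

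The key step is to evaluate $X_{\mu_\tau}$ and $X_F$ in the splitting $TE\cong V\oplus H$ of Remark \ref{rmk:splitting_TE=V+H}. Contracting $\Om=2\pi r\,dr\wedge\alpha+(\pi r^2+1)\wp^*\om$ with an ansatz $aR+c\,\partial_r+W$ ($W$ horizontal) and matching the $\alpha$-, $dr$- and horizontal components, one finds on $E\setminus M$ that $X_{\mu_\tau}=R$ (recovering the fact recorded before the lemma; indeed it holds on all of $E\setminus M$, not just along $\Sigma_\tau$) and, since $F=(\pi r^2+1)(f\circ\wp)$,
\beq
X_F=(f\circ\wp)\,R+\widetilde{X_f}\;,
\eeq
where $\widetilde{X_f}$ is the horizontal lift of the Hamiltonian vector field $X_f$ of $f$ on $(M,\om)$; the radial component drops out because $f\circ\wp$ is radially constant, while the horizontal part is pinned down by the isomorphism $\wp_*\colon(H,d\alpha)\stackrel{\cong}{\pf}(TM,\om)$. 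Writing $q:=\wp(u)$, the loop equation thus decouples into
\beq
\dot u=\big(\eta+f(q)\big)R(u)+\widetilde{X_f}(u)\;.
\eeq

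Finally I would separate vertical and horizontal parts. Applying $\wp_*$, which annihilates the vertical $R$ and sends $\widetilde{X_f}$ to $X_f$, yields $\dot q=X_f(q)$, so $q$ is a contractible $1$-periodic orbit of the Hamiltonian flow of $f$ on $M$. This is where the hypothesis enters and where I expect the only genuine obstacle: for $C^2$-small $f$ the only such orbits are the constants sitting at $\Crit(f)$. I would invoke the standard fact that a sufficiently $C^2$-small autonomous Hamiltonian on a closed symplectic manifold admits no nonconstant contractible $1$-periodic orbit, which follows from the a priori lower bound forcing the minimal period of any nonconstant periodic orbit of $X_f$ to be large when $\|f\|_{C^2}$ is small. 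Granting this, $q\equiv q_0\in\Crit(f)$, the term $\widetilde{X_f}(u)$ vanishes, and the equation collapses to $\dot u=(\eta+f(q_0))R(u)$. Since $R$ is vertical and tangent to the $S^1$-fibres, $r$ stays constant along $u$; the constraint $\int_0^1\mu_\tau(u)\,dt=0$, with $\tau>0$, in particular excludes the zero section and forces $\pi r(u)^2=\tau$, i.e.\ $u\subset\Sigma_\tau$. In the prequantization normalization $d\alpha=\wp^*\om$ the simple closed Reeb orbit --- the $S^1$-fibre --- has period $1$, so $1$-periodicity of $u$ forces $\eta+f(q_0)\in\Z$ and exhibits $u$ as the corresponding $(\eta+f(q_0))$-fold cover. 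The converse is immediate: if the three displayed equations hold then $\widetilde{X_f}(u)=0$ and $\mu_\tau(u)\equiv0$, so both partial differentials of $\A_f^\tau$ vanish.
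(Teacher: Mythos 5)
Your proposal is correct and matches the paper's proof in all essentials: both compute the critical point equations, identify $X_{\mu_\tau}=R$ and $X_F=(f\circ\wp)R+X_f^h$ via the splitting $TE\cong V\oplus H$, project by $\wp_*$ to reduce to $1$-periodic orbits of $X_f$ on $M$, and invoke the standard $C^2$-smallness argument (the paper cites Hofer--Zehnder) to force $q\in\Crit(f)$. The only cosmetic difference is how the integral constraint is upgraded to the pointwise condition $\pi r(u)^2=\tau$ --- the paper uses $\Om(X_{\mu_\tau},X_F)=0$ to see $\mu_\tau$ is conserved along the orbit, while you use $dr(R)=0$ after localizing over $\Crit(f)$; both are valid.
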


\begin{proof}
The critical point equation for $\A_f^\tau$ is
\beq\label{eqn:critical_point_equation_A_f_tau}
\left\{
\begin{aligned}
\;&\dot u=\eta X_{\mu_\tau}(u)+X_F(u)\\
&\int_0^1\mu_\tau(u)dt=0\;.
\end{aligned}
\right.
\eeq
The Hamiltonian vector field of $F$ is $X_F=(f\circ\wp) X_{\mu_\tau}+X_f^h$ where $X_f^h$ is the horizontal lift of $X_f$, i.e. $X_f^h\in H$ and $\wp_*(X_f^h)=X_f$. Indeed,
\bea
\Om(X_F,\cdot)&=\Big(2\pi rdr\wedge\alpha+\big(\pi r^2+1\big)\wp^*\om\Big)\Big((f\circ\wp) X_{\mu_\tau}+X_f^h,\cdot\Big)\\
&=-2\pi r(f\circ\wp)+\big(\pi r^2+1\big)\wp^*\big(\om(X_f,\cdot)\big)\\
&=-2\pi r(f\circ\wp)-\big(\pi r^2+1\big)\wp^*df\\
&=-d\big[\big(\pi r^2+1\big)f\circ\wp\big]\\
&=-dF\;.
\eea
We point out that $\Om(X_{\mu_\tau},X_F)=dF(X_{\mu_\tau})=0$ since $\wp_*(X_{\mu_\tau})=0=dr(X_{\mu_\tau})$. Therefore the critical point equation  \eqref{eqn:critical_point_equation_A_f_tau} simplifies to 
\beq
\left\{
\begin{aligned}
\;&\dot u=\big(\eta+f(\wp(u))\big)X_{\mu_\tau}(u)+X_f^h(u)\\
&\mu_\tau\big(u(t)\big)=0\quad\forall t\in S^1\;.
\end{aligned}
\right.
\eeq
The last equation translates into $r(u(t))$ being constant and $\pi r(u)^2=\tau$. The critical point equation together with $\wp_*(X_{\mu_\tau})=0$ implies that $\wp(u)$ is a 1-periodic solution of $X_f$ in $M$. Now, if the $C^2$-norm of $f$ is sufficiently small the only 1-periodic solutions of $X_f$ are the critical points of $f$, see \cite[p.\,185]{Hofer_Zehner_Book}. Thus, $q:=\wp(u)\in\Crit(f)$ and $u$ corresponds to a $(\eta+f(q))$-periodic orbit of $R$ on $\Sigma_\tau$. This implies that $\eta+f(q)\in\Z$ due to our convention $S^1=\R/\Z$.
\end{proof}

\begin{Rmk}\label{rmk:cappings_critical_value}
To summarize critical points of $\A_f^\tau$ correspond to all Reeb orbits over $\Crit(f)$ together with cappings. More precisely, all forward (i.e.~$\eta+f(q)>0$) and backward (i.e.~$\eta+f(q)<0$) iterations and also the ``constants'' (i.e. $\eta+f(q)=0$) together with cappings.
\end{Rmk}

\begin{Conv}\label{convention_f_C2_small}\quad\\[-3ex]
\begin{itemize}\itemsep=1ex
\item From now on we assume that the Morse function $f:M\to\R$ is chosen $C^2$-small so that Lemma \ref{lem:crit_A_f} applies.
% and $f(M)\subset(0,1)$. The latter requirement will become clear from the proof of Theorem \ref{thm:main}.
\item  Every simple periodic Reeb orbit $v\subset \Sigma_\tau$ has a capping by its fiber disk $d_v\subset E$ and correspondingly the $n$-fold cover $v^n$ has $d_v^n$ as capping disk  for $n\in\Z\setminus\{0\}$. Every non-constant critical point $\big([u,\bar u],\eta \big)$ can be expressed in the form $u=v^n$ and $\bar u=d_v^n\#A$ for some $A\in\Gamma_E$. If $u$ is a constant critical point, the capping disk $\bar u$ can be thought of as a sphere $A\in\Gamma_E$. We are going to adopt the notation $[u,\bar u]=[u,A]$. 
\end{itemize}
\end{Conv}

Using Lemma \ref{lem:crit_A_f} we compute the action value for a critical point $\big([u,\bar u],\eta\big)=\big([v^n,A],\eta\big)$ with  $q=\wp(u)$.
\bea\label{eq:computation_of_action}
\A_f^\tau\big([v^n,A],\eta\big)&=\int_{D^2}\big(d_v^n\big)^*\Om+\om(A)-\eta\int_0^1 \underbrace{\mu_\tau(v^n)}_{=0} dt-\int_0^1F(v^n)dt\\
&=\int_{D^2}\big(d_v^n\big)^*\big[d\big(\pi r^2\alpha\big)\big]-\int_0^1\big(\pi r^2+1\big)f\circ\wp(v^n)dt+\om(A)\\
&=\int_{S^1}(v^n)^*\big(\pi r^2\alpha\big)-(\tau+1)f(q)+\om(A)\\
&=\int_{S^1}\tau \alpha\big((\eta +f(q))R(v^n)\big)-(\tau+1)f(q)+\om(A)\\
&=\tau\big(\eta +f(q)\big)-(\tau+1)f(q)+\om(A)\\
&=\tau\eta+\om(A)-f(q)\\
&=\tau n+\om(A)-(\tau+1)f(q)\;,
\eea
where we use $n=\eta +f(q)$ and $\Om=\om:\Gamma_E\cong\Gamma_M\to\Z$. 

Next we explain how to define Floer homology for $\A_f^\tau$. This mainly follows the lines of \cite{Frauenfelder_Habilitation} and \cite{Albers_Frauenfelder_Bubbles_and_Onis}. We assume throughout that $(E,\Om)$ is semi-positive. According to \cite[Exercise 6.4.3]{McDuff_Salamon_J_holomorphic_curves_and_symplectic_topology} the symplectic manifold $(E,\Om)$ is semi-positive if and only if 
\begin{itemize}\itemsep=1ex
\label{test}
\item $(E,\Om)$ is symplectically aspherical, 
\item $(E,\Om)$ is monotone,
\item $c_1^{TE}:\pi_2(E)\to\Z$ vanishes,
\item the minimal Chern number $N_E$ of $E$ satisfies $N_E\geq\tfrac12\dim E-2$.
\end{itemize}
Since $\pi_2(E)\cong\pi_2(M)$ via $\wp_*$ the first condition is equivalent to $(M,\om)$ being symplectically aspherical which is condition (1) in Theorem \ref{thm:main}.

If we assume that there exists a constant $c\in\Z$ such that $c_1^{TM}=c\om:\pi_2(M)\to\Z$ then  
\beq
c_1^{TE}=\wp^*\big(c_1^{TM}+c_1^E\big)=(c-1)\wp^*\om\;.
\eeq
Thus, if $c>1$ the symplectic manifold $(E,\Om)$ is monotone and for $c=1$ we have $c_1^{TE}=0$ on $\pi_2(E)$. Furthermore, if we denote by $\nu\in\Z_{\geq0}$ the generator of $\om\big(\pi_2(M)\big)=\nu\Z$ then the minimal Chern number $N_E$ of $E$ is
\beq
N_E=|c-1|\nu\;.
\eeq
Thus, we proved the following Lemma.

\begin{Lemma}\label{lem:semi_positive}
The symplectic manifold $(E,\Om)$ is semi-positive if 
\begin{itemize}\itemsep=1ex
\item $(M,\om)$ is symplectically aspherical or
\item $c_1^{TM}=c\om:\pi_2(M)\to\Z$ with $c\geq 1$ or $N_E=|c-1|\nu\geq\tfrac12\dim E-2$.
\end{itemize}
\end{Lemma}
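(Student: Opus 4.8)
The plan is to check that under each hypothesis one of the four equivalent conditions for semi-positivity recalled above from \cite[Exercise 6.4.3]{McDuff_Salamon_J_holomorphic_curves_and_symplectic_topology} is satisfied. Two structural facts do all the work: the projection induces an isomorphism $\wp_*:\pi_2(E)\to\pi_2(M)$, and, since $\Om-\wp^*\om=d(\pi r^2\alpha)$ is exact, the cohomology classes agree,
\beq
[\Om]=\wp^*[\om]\in\H^2(E;\R)\;.
\eeq
Restricting to spheres, this means $\Om(A)=\om(\wp_*A)$ for every $A\in\pi_2(E)$; in other words $\Om$ and $\om$ correspond to one another under $\wp_*$. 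Together with the identity $c_1^{TE}=\wp^*\big(c_1^{TM}+c_1^E\big)$ and $c_1^E=-[\om]$ already recorded above, this reduces every statement about $(E,\Om)$ to the corresponding statement about $(M,\om)$.

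First I would dispose of the aspherical case. If $\om\big(\pi_2(M)\big)=0$, then by the area identity $\Om\big(\pi_2(E)\big)=\om\big(\wp_*\pi_2(E)\big)=\om\big(\pi_2(M)\big)=0$, so $(E,\Om)$ is symplectically aspherical and the first criterion holds.

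Next, assume $c_1^{TM}=c\om$ on $\pi_2(M)$. Pulling back and using $c_1^E=-[\om]$ gives $c_1^{TE}=(c-1)\wp^*\om$ on $\pi_2(E)$, which by the area identity reads $c_1^{TE}=(c-1)\,[\Om]$ on $\pi_2(E)$. Hence if $c>1$ the manifold $(E,\Om)$ is monotone with positive proportionality constant $c-1$, and if $c=1$ the class $c_1^{TE}$ vanishes on $\pi_2(E)$; in either case $c\geq1$ yields one of the two middle criteria. For the remaining possibility I would compute the minimal Chern number directly: from $c_1^{TE}=(c-1)\wp^*\om$ and $\wp^*\om\big(\pi_2(E)\big)=\om\big(\pi_2(M)\big)=\nu\Z$ one obtains $c_1^{TE}\big(\pi_2(E)\big)=|c-1|\nu\,\Z$, so $N_E=|c-1|\nu$. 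The hypothesis $N_E\geq\tfrac12\dim E-2$ is then exactly the fourth criterion.

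The only delicate point is the bookkeeping underlying the very first step, namely that symplectic area and first Chern number descend along the isomorphism $\wp_*$ to precisely the corresponding quantities on $M$. Once the exactness of $\Om-\wp^*\om$ and the formula for $c_1^{TE}$ are in hand, everything else is a direct, case-by-case verification of the four-fold dichotomy, so I do not anticipate any genuine obstacle.
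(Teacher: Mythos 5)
Your proposal is correct and takes essentially the same route as the paper: both verify the four-fold McDuff--Salamon dichotomy case by case, using $\pi_2(E)\cong\pi_2(M)$ via $\wp_*$ and the identity $c_1^{TE}=\wp^*(c_1^{TM}+c_1^E)=(c-1)\wp^*\om$ to reduce everything to $(M,\om)$. The only cosmetic difference is that you make the area identity $\Om(A)=\om(\wp_*A)$ (from exactness of $\Om-\wp^*\om=d(\pi r^2\alpha)$) explicit, which the paper leaves implicit.
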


In the following Lemma we use the notation of Convention \ref{convention_f_C2_small}. For $\big([u,\bar u],\eta\big)=\big([u,A],\eta\big)\in\Crit(\A^\tau_f)$, we denote by $\CZ^E(u,\bar u)\equiv\CZ^E(u,A)$ the Conley-Zehnder index of $u$ with respect to the capping disk $\bar u$. We refer to \cite{Robbin_Salamon_Maslov_index_for_paths} for a thorough discussion of the Conley-Zehnder index.
\begin{Lemma}\label{lemma:index_of_fiber}
The Conley-Zehnder index of a $n$-fold cover $v^n$ 	with its fiber disk $d_v^n$ is
\beq
\CZ^E(v^n,d_v^n)=2n\;.
\eeq 
More generally, for any capping $\bar u=d_v^n\#A$ of $u=v^n$,
\beq
\CZ^E(v^n,d_v^n\#A)=2n+2c_1^{TE}(A)\;.
\eeq 
If $(M,\om)$ is symplectically aspherical then all iterates $v^n$ are non-contractible inside $\Sigma$. Otherwise, the first iterate of $v$, which is contractible in $\Sigma$, is the orbit $v^{\nu}$. 

If we assume $c_1^{TM}=c\om$ then the Conley-Zehnder index of $v^n$ for $n\in\nu\Z\setminus\{0\}$ with respect to a capping disk $\bar\nu$ contained entirely in $\Sigma$ is
\beq
\CZ^E(u,\bar \nu)=2cn\;.
\eeq
\end{Lemma}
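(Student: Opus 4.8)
The plan is to compute the Conley-Zehnder index by exploiting the product structure of the negative line bundle and the normalisation afforded by the fiber disk capping. The key geometric input is the splitting $TE \cong V \oplus H$ from Remark \ref{rmk:splitting_TE=V+H}, under which $\wp_*:(H,d\alpha)\to(TM,\om)$ is a symplectic isomorphism and $V$ is spanned by the Reeb field $R$ and the radial field $r\partial_r$. Along a Reeb orbit $v^n$ contained in $\Sigma_\tau$, the linearised Reeb flow therefore decomposes into a vertical contribution, coming from the $S^1$-fibre direction, and a horizontal contribution, coming from the symplectically trivialised base. The strategy is to trivialise each summand along the fiber disk $d_v^n$ and add the two resulting Conley-Zehnder contributions via the direct-sum (Whitney) property of the index.

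\emph{First} I would treat the base case $n=1$ with the fiber-disk capping. The horizontal subbundle $H$ pulled back over the fiber disk $d_v$ is symplectically trivial: the disk projects under $\wp$ to the single point $q=\wp(v)\in M$, so $\wp^*TM$ is the constant symplectic vector space $T_qM$ and the horizontal part of the linearised flow is the identity, contributing $0$ to the index (more precisely, the small $C^2$-perturbation by $f$ contributes only a correction of size less than $\tfrac12$ that does not affect the integer index, by Lemma \ref{lem:crit_A_f}). The vertical contribution is the genuinely nontrivial piece: the fiber disk $d_v$ is exactly a disk in the complex line bundle $E$, and the linearised Reeb rotation in the fibre over one full period winds once. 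A standard computation of the Conley-Zehnder index of a once-wound loop in a line bundle trivialised by the disk $d_v$ yields $2$, giving $\CZ^E(v,d_v)=2$, and for the $n$-fold cover $\CZ^E(v^n,d_v^n)=2n$.

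\emph{Second}, for the general capping $\bar u = d_v^n \# A$ with $A\in\Gamma_E$, I would invoke the capping-change (catenation) formula for the Conley-Zehnder index: changing the capping by a sphere $A$ shifts the index by $2c_1^{TE}(A)$, yielding $\CZ^E(v^n, d_v^n\#A) = 2n + 2c_1^{TE}(A)$. \emph{Third}, for the contractibility statements, I would use that $v^n$ is contractible in $\Sigma$ precisely when it bounds a disk in $\Sigma$ rather than one that escapes into the fibre; since the minimal such wrapping is governed by $\om(\pi_2(M))=\nu\Z$, the first contractible iterate is $v^\nu$, while in the aspherical case no nontrivial iterate bounds in $\Sigma$. \emph{Finally}, under $c_1^{TM}=c\om$ and for a capping $\bar\nu$ lying entirely in $\Sigma$ (so the fibre winding is cancelled and only the base geometry contributes), I would use $c_1^{TE}=(c-1)\wp^*\om$ together with $\om(A)$ for the relevant sphere class of wrapping $n\in\nu\Z$ to reduce $2n + 2c_1^{TE}(A)$ to $2cn$; concretely, an in-$\Sigma$ capping of $v^n$ realises the capping sphere $A$ with $\om(A)=-n$ (the fibre disk is replaced by its base counterpart), so $2n + 2(c-1)(-n) = 2cn$.

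The main obstacle I expect is the vertical index computation in the first step: pinning down that the once-wound fibre loop contributes exactly $+2$ (and not $0$ or some other even integer) requires carefully choosing the symplectic trivialisation of $V$ over $d_v$ compatible with the complex structure of the line bundle and verifying the sign and rotation direction against the convention $\Om(X_{\mu_\tau},\cdot)=-d\mu_\tau$ fixed earlier. The bookkeeping relating an in-$\Sigma$ capping to the value $\om(A)=-n$ in the last step is the other delicate point, since it hinges on correctly identifying how the fibre disk's area is traded against the base sphere class when the capping is pushed into $\Sigma$.
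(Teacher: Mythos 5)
Your overall strategy coincides with the paper's: the splitting $TE\cong V\oplus H$ from Remark \ref{rmk:splitting_TE=V+H}, the observation that the linearised flow is the identity in horizontal directions so that only the vertical $S^1\subset\C$ model contributes and gives $\CZ^E(v^n,d_v^n)=2n$, the standard capping-change formula for the second claim, and (in effect) the homotopy exact sequence of $S^1\to\Sigma\to M$ for the contractibility statements. All of that matches the paper's proof.

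The last step, however, contains a genuine error. Writing $\bar\nu=d_v^n\#A$ with $A=\bar\nu\#(-d_v^n)\in\Gamma_E$, you claim $\om(A)=-n$ and then assert
\[
2n+2(c-1)(-n)=2cn\;,
\]
which is arithmetically false: the left-hand side equals $(4-2c)n$, and this agrees with $2cn$ only when $c=1$. The correct value is $\wp^*\om(A)=+n$. Indeed, the fiber disk contributes nothing, $\int_{d_v^n}\wp^*\om=0$, since $\wp$ collapses it to the point $q$, while the in-$\Sigma$ capping satisfies $\wp^*\om=d\alpha$ along $\Sigma$, so by Stokes
\[
\int_{\bar\nu}\wp^*\om=\int_{\bar\nu}d\alpha=\int_{v^n}\alpha=n\;.
\]
Hence $c_1^{TE}(A)=(c-1)\wp^*\om(A)=(c-1)n$ and $\CZ^E(v^n,\bar\nu)=2n+2(c-1)n=2cn$. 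Your sign $-n$ most likely comes from conflating $\wp^*\om(A)$ with $c_1^E(A)=-\om(\wp_*A)$, which is what controls the winding of the boundary circle in the fiber (and is the reason $v^\nu$ is the first contractible iterate), but the capping-change formula requires $c_1^{TE}(A)=(c-1)\wp^*\om(A)$, not $c_1^E(A)$. You correctly flagged this bookkeeping as the delicate point; it is precisely where the proposal breaks down.
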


\begin{proof}
Since the linearized map of the Reeb flow is the identity in horizontal directions, the first assertion follows from the corresponding computation for $S^1\subset\C$.  The relevant bit of the homotopy long exact sequence of the $S^1$-bundle $\Sigma\to M$ is 
\beq
\cdots\pf\pi_2(M)\stackrel{\delta}{\pf}\pi_1(S^1)\stackrel{i_*}{\pf}\pi_1(\Sigma)\pf\cdots\;.
\eeq
If we identify $\pi_1(S^1)\cong\Z$ then $i_*(k)=[v^k]$ and $\delta(s)=-\om(s)$ with respect to the homomorphism $\om:\pi_2(M)\to\Z$. Thus, if $(M,\om)$ is symplectically aspherical then all iterates $v^k$ are non-contractible in $\Sigma$. Otherwise $\om\big(\pi_2(M)\big)=\nu\Z$ with $\nu>0$ and therefore the first iterate of $v$ which is contractible in $\Sigma$ is the orbit $v^{\nu}$. 

Now we assume that $c_1^{TM}=c\om$ and recall that $\bar\nu$ is a capping which is entirely contained inside $\Sigma$. 
\bea
\CZ^E(v^n,\bar\nu)&\stackrel{\phantom{(\ast)}}{=}\CZ^E\Big(v^n,d_v^n\#\big(\underbrace{\bar\nu\#-d_v^n}_{\in\Gamma_E}\big)\Big)\\
&\stackrel{\phantom{(\ast)}}{=}\CZ^E\big(v^n,d_v^n\big)+2c_1^{TE}\big(\bar\nu\#-d_v^n\big)\\
&\stackrel{\phantom{(\ast)}}{=}2n+2(c-1)\wp^*\om\big(\bar\nu\#-d_v^n\big)\\
&\stackrel{\phantom{(\ast)}}{=}2n+2(c-1)\bigg[\int_{\bar\nu}\wp^*\om-\underbrace{\int_{d_v^n}\wp^*\om}_{=0}\bigg]\\
&\stackrel{(\ast)}{=}2n+2(c-1)\int_{\bar\nu}d\alpha\\
&\stackrel{\phantom{(\ast)}}{=}2n+2(c-1)\int_{v^n}\alpha\\
&\stackrel{\phantom{(\ast)}}{=}2n+2(c-1)n\\
&\stackrel{\phantom{(\ast)}}{=}2cn
\eea
where we used in $(\ast)$ that the disk $\bar\nu$ is contained inside $\Sigma$.
\end{proof}

\begin{Def}\label{def:index_of_critical_points}
We point out that  critical points of $\A_f^\tau$ are $S^1$-families, cf.~Lemma \ref{lem:crit_A_f}. We choose a perfect Morse function $h:\Crit(\A_f^\tau)\to\R$ such that every critical manifold $S^1\cdot\big([u,A],\eta\big)\subset\Crit(\A_f^\tau)$ gives rise to two critical points of $h$ which we denote by $\big([u,A]^\pm,\eta\big)$ according to the maximum resp.~minimum of $h$ on $S^1\cdot\big([u,A],\eta\big)$. We define the index of a critical point by
\begin{equation}\label{eq:mu-index}
\mu\big([u,A]^\pm,\eta\big):=\CZ^E(u,A)-\Morse(\wp(u),f)+\tfrac12 \dim M\pm\tfrac12 \in\tfrac12 +\Z\;,
\end{equation}
where $\Morse(\wp(u),f)$ is the Morse index of $\wp(u)\in\Crit (f)$. In case that $u$ is a constant critical point we define $\CZ^E(u,A=0):=0$. We set
\beq
\c:=\Crit(h)=\left\{\big([u,A]^\pm,\eta\big)\right\}\subset\Crit(\A_f^\tau)
\eeq
and 
\beq
\c_k:=\left\{\big([u,A]^\bullet,\eta\big)\in\c\,\bigr|\, \mu\big([u,A]^\bullet,\eta)\big)=k\right\}\;.
\eeq
Here, $\bullet$ indicates some choice of $\pm$.
\end{Def}

In order to define Rabinowitz Floer homology and to prove the vanishing result we rely on a fairly special class of almost complex structures which we describe next. In the next subsection we prove that this class is big enough to prove the necessary transversality results. We recall that we split the tangent bundle $TE\cong V\oplus H$ in vertical resp.~horizontal subspaces $V$ resp.~$H$, see Remark \ref{rmk:splitting_TE=V+H}. Let us abbreviate by
\beqn
\j:=\big\{j\in \Gamma(S^1\times M, \mathrm{Aut}(TM))\mid j_t:=j(t,\cdot) \text{ is an $\om$-compatible almost complex structure}\big\}
\eeq
the space of $S^1$-families of compatible almost complex structures on $(M,\om)$. Next we fix disjoint open balls around each point in $\Crit(f)$. The union of these balls is denoted by $\U$. For a fixed $j\in\j$ we denote by $\B(j)$ the set of $B\in \Gamma_0(S^1\times E, \mathrm{L}(H,V))$ where $B_t:=B(t,\cdot)$ satisfies
%\beq\label{eqn:def_of_B(J)}
%\B(j):=\big\{B\in \Gamma_0(S^1\times E, \mathrm{L}(H,V))\mid B_t:=B(t,\cdot) \text{ satisfies } \eqref{eqn:conditions_of_B(J)}\big\}
%\eeq
%where 
\beq\label{eqn:conditions_of_B(J)}
iB_t+B_tj_t=0\;\forall t\in S^1\text{ and }B_t(e)=0\;\forall e\in \wp^{-1}(\U)\;.
\eeq
Here the subscript $0$ indicates compact support and $\mathrm{L}(H,V)$ is the space of linear maps. To describe the Floer equation we will choose a $S^1$-family $J_t$ of  almost complex structures on $E$ of the form
\beq
J_t=\begin{pmatrix}
 i & B_t\\
 0 & j_t
\end{pmatrix}\;.
\eeq
The matrix representation refers to the splitting $TE\cong V\oplus H$. Moreover, $j\in\j$ and $B\in\B(j)$ and $i$ is the standard complex structure on $V_e\cong\C$, $e\in E$. We point out that $J_t$ is \emph{not} $\Om$-compatible. But, since $\begin{pmatrix}
 i & 0\\
 0 & j_t
\end{pmatrix}$ is tame (even compatible) and $B$ has compact support, the almost complex structure $J_t$ is $\Om$-tame for sufficiently small $B_t$. We denote by $\B^T(j)\subset\B(j)$ the non-empty open convex subset consisting of those $B\in\B(j)$ for which the corresponding $J_t$ is tame.

We use $J_t$ to introduce a bilinear form $\m$ on $T\big(\Lt_E\times \R\big)$ as follows. For $(\hat{u}_1,\hat{\eta}_1),(\hat{u}_2,\hat{\eta}_2)\in T_{([u,A],\eta)}\big(\Lt_E\times \R\big)=\Gamma(S^1,u^*TE)\times \R$ we set
\beq
\m\big((\hat{u}_1,\hat{\eta}_1),(\hat{u}_2,\hat{\eta}_2)\big):=-\int_0^1\Om\Big(J_t\big(u(t)\big)\hat{u}_1(t),\hat{u}_2(t)\Big)dt+\hat{\eta}_1\hat{\eta}_2\;.
\eeq
The bilinear form $\m$ is not symmetric but  positive definite  since $J$ is tame. Therefore we can define the vector field $\nabla \A_f^\tau(w)$ at $w=\big([u,A],\eta\big)$ implicitly by
\beq
d\A_f^\tau(w)\hat w=\m\big(\nabla \A_f^\tau(w),\hat{w}\big)\quad\forall\hat w\in T_w\big(\Lt_E\times \R\big)\;.
\eeq
An explicit expression is
\beq
\nabla \A_f^\tau(w)=
\begin{pmatrix}
-J_t(u)\big(\p_t u-\eta X_{\mu_\tau}(u)- X_F(u)\big)\\[1ex]
\displaystyle-\int_0^1\mu_\tau(u)dt
\end{pmatrix}\;.
\eeq
$\nabla\A_f^\tau$ is a gradient-like vector field for $\A_f^\tau$ since $J$ is tame and $B$ vanishes near critical points of $\A_f^\tau$: $B_t(e)=0$ for all $e\in\wp^{-1}(\U)$. Indeed, $d\A_f^\tau(w)\nabla \A_f^\tau(w)=\m\big(\nabla \A_f^\tau(w),\nabla \A_f^\tau(w)\big)\geq 0$ with equality if and only if $w\in\Crit(\A_f^\tau)$. Moreover, $\m$ is an inner product near critical points.

To construct Floer homology for $\A_f^\tau$ we study solutions $w=(u,\eta)\in C^\infty(\R\times S^1,E)\times C^\infty(\R,\R)$ to the Floer equations corresponding to \emph{positive} gradient flow of $\A_f^\tau$
\begin{equation}\label{eqn:Floer_eqn_for_A}
\left\{
\begin{aligned}
&\p_su+J_t(u)\big(\p_tu-\eta X_{\mu_\tau}(u) - X_F(u)\big)=0\\[1ex]
&\p_s\eta+\int_0^1\mu_\tau(u)dt=0\;.
\end{aligned}
\right. 
\end{equation}
Due to the assumption that $B$ vanishes near critical points the Floer equation thought of as a differential operator is Fredholm. The main ingredients for defining Floer homology are transversality and compactness for solution spaces of the Floer equation. This needs some attention in our framework due the restriction of the class of almost complex structures we consider and due to potential bubbling-off of holomorphic spheres.

The  projection $\wp$ maps critical points of the Rabinowitz Floer action functional $\A_f^\tau$ to those of the action functional of classical mechanics $\a_f$ on $(M,\om)$ 
\bea
\a_f:\Lt(M)&\pf\R\\
\a_f\big([q,\bar q]\big)&:=\int_{D^2}\bar q^*\om-\int_0^1f\big(q(t)\big)dt\;,
\eea
see Lemma \ref{lem:projection_of_crit_points_are_crit_points}. We recall that we chose the Morse function $f$ in a $C^2$-small fashion, see Convention \ref{convention_f_C2_small}. This implies that all critical points of $\a_f$ are critical points of $f$ with some capping, i.e.
\beq
\Crit(\a_f)\cong\Crit(f)\times \Gamma_M\;.
\eeq
We use the following convention for the Conley-Zehnder index for $(x,A)\in\Crit(\a_f)\cong\Crit(f)\times \Gamma_M$ 
\beq
\CZ^M(x,A)=-\Morse(x,f)+\tfrac12 \dim M+2c_1^{TM}(A)\;.
\eeq

\begin{Lemma}\label{lem:projection_of_crit_points_are_crit_points}
The projection $\wp$ induces the map
\bea
\Pi:\Crit(\A_f^\tau)&\pf\Crit(\a_f)\cong\Crit(f)\times \Gamma_M\\
\big([u,A],\eta\big)&\longmapsto [\wp(u),A]\;.
\eea 
\end{Lemma}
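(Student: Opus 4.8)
The plan is to verify directly that the stated map takes values in $\Crit(\a_f)$, by combining the critical point characterization of Lemma \ref{lem:crit_A_f} with the \emph{verticality} of the Reeb vector field. The crucial structural input is that $R$ lies in the vertical subbundle $V$ (Remark \ref{rmk:splitting_TE=V+H}), so that $\wp_* R = 0$; this is what forces the projected loop to be constant.

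First I would take a critical point $\big([u,A],\eta\big)\in\Crit(\A_f^\tau)$ and recall from Lemma \ref{lem:crit_A_f} that it satisfies $q:=\wp(u)\in\Crit(f)$ together with $\dot u = \big(\eta + f(q)\big)R(u)$. Applying $\wp_*$ to this last equation and using $\wp_* R = 0$ yields $\tfrac{d}{dt}\wp(u) = \big(\eta+f(q)\big)\wp_*\big(R(u)\big) = 0$, so that $\wp(u)$ is the constant loop at $q$. Since $q\in\Crit(f)$ we have $df(q)=0$ and hence $X_f(q)=0$, so the constant loop at $q$ solves the Hamilton equation $\dot x = X_f(x)$ underlying the functional $\a_f$. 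Equivalently, under the identification $\Crit(\a_f)\cong\Crit(f)\times\Gamma_M$ recorded above, the point $q$ equipped with any capping is a critical point of $\a_f$.

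It then remains to check that the capping class is transported correctly, which fixes the second coordinate of the image. Using Convention \ref{convention_f_C2_small} I would write $u=v^n$ and $\bar u = d_v^n\#A$, and observe that the projected disk $\wp\circ\bar u$ is a capping disk for the constant loop $q$. Because the fiber disk $d_v$ is contained in the single fiber $\wp^{-1}(q)$, its projection $\wp(d_v^n)$ is the constant $q$, so that the homotopy class of $\wp\circ\bar u$ is precisely the image of $A$ under the isomorphism $\Gamma_E\cong\Gamma_M$ induced by $\wp_*$ (Remark \ref{rmk:Gamma_E=Gamma_M}). Consequently $\Pi\big([u,A],\eta\big)=[\wp(u),A]=[q,A]\in\Crit(f)\times\Gamma_M$, which shows the map is well-defined as stated.

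The argument is essentially a direct computation rather than a substantial result, so I do not expect a serious obstacle. The only point requiring genuine care is the bookkeeping of the capping class through $\wp$: one must be sure that no element of $\Gamma$ is lost or spuriously introduced under the identification $\Gamma_E\cong\Gamma_M$. This is exactly where the observation that fiber disks project to constants does the work, guaranteeing that $A$ passes to its image without correction.
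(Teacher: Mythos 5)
Your proof is correct and is essentially the explicit unpacking of the paper's own (one-line) proof, which simply cites the definition of $\A_f^\tau$ together with Remark \ref{rmk:Gamma_E=Gamma_M}; the verticality of $R$ and the fact that fiber disks project to constants are exactly the observations implicitly invoked there. The only detail left unstated is the constant-critical-point case ($u$ constant, $\bar u$ a sphere $A$), but your capping argument covers it verbatim.
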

\begin{proof}
This follows directly from the definition \eqref{eqn:def_of_RFH_functional} of the action functional $\A_f^\tau$, see also Remark \ref{rmk:Gamma_E=Gamma_M}. 
\end{proof}
After a choice of $j\in\j$ the action functional $\a_f$ gives rise to the following Floer equation for $q:\R\times S^1\to M$
\beq\label{eqn:Floer_eqn_for_a}
\p_sq+j_t(q)\big(\p_tq - X_f(q)\big)=0\;.
\eeq
We recall that solutions of either Floer equation is of finite energy if and only if it converges at $\pm\infty$ to critical points of $\A_f^\tau$ resp.~$\a_f$. That is, a solution $w=(u,\eta)$ of the Floer equation \eqref{eqn:Floer_eqn_for_A} has finite energy
\beq
\int_\R\int_{S^1}\Big(|\p_su|^2+|\p_s\eta|^2\Big)dtds<\infty
\eeq
if and only if there exists $(u_\pm,\eta_\pm)\in\L(E)\times\R$ satisfying \eqref{eq:crit_eq} such that
\beq
\lim_{s\to\pm\infty}\big(u(s,\cdot),\eta(s)\big)=(u_\pm,\eta_\pm)
\eeq
and similarly for $\a_f$. Following the usual Morse-Bott ideas we denote for $w_\pm\in\c\subset\Crit(\A_f^\tau)$
\beq
\Mh(w_-,w_+):=\big\{w \text{ solves }\eqref{eqn:Floer_eqn_for_A}\text{ with}\lim_{s\to\pm\infty}w(s)\in W^{\pm}(w_\pm, h) \big\}
\eeq
the moduli space of finite energy solutions of the Floer equation of $\A_f^\tau$. Here $h:\Crit(\A_f^\tau)\to\R$ is the perfect Morse function from Definition \ref{def:index_of_critical_points} and $W^{+}(w_+, h)$ resp.~$W^{-}(w_-, h)$ denotes the stable resp.~unstable manifold of $h$ on $\Crit(\A_f^\tau)$.
Similarly, for $q_\pm\in\Crit(\a_f)$ let 
\beq
\Nh(q_-,q_+):=\big\{w \text{ solves }\eqref{eqn:Floer_eqn_for_a}\text{ with }\lim_{s\to\pm\infty}q(s)=q_\pm\big\}\;.
\eeq
Here, we abuse notation in the following sense. If $w_\pm=\big([u_\pm,\bar u_\pm],\eta_\pm\big)\in\Crit(\A_f^\tau)$ is given and $w=(u,\eta)$ is a finite energy solution of \eqref{eqn:Floer_eqn_for_A} then by $\lim_{s\to\pm\infty}w(s)=w_\pm$ we mean that
\beq
\lim_{s\to\pm\infty}\big(u(s,\cdot),\eta(s)\big)=(u_\pm,\eta_\pm)
\eeq
and
\beq\label{eq:Gamma_condition_for_grad_trajectories}
\big[(-\bar u_-)\#u\#\bar u_+\big]=0\in\Gamma_E\;.
\eeq
The same remark applies to $\a_f$. Unless $w_-=w_+$ the moduli space $\Mh(w_-,w_+)$ carries a free $\R$-action by shifts. We denote the quotient by 
\beq
\M(w_-,w_+):=\Mh(w_-,w_+)/\R
\eeq 
and similarly
\beq
\mathcal{N}(q_-,q_+):=\Nh(q_-,q_+)/\R\;.
\eeq 
All moduli spaces depend on additional data, e.g.~an almost complex structure, which we suppress in the notation.
\begin{Lemma}\label{lem:projection_of_grad_traj_are_grad_traj}
The projection $\wp$ induces the maps
\bea
\Pi:\Mh(w_-,w_+)&\pf \Nh\big(\Pi(w_-),\Pi(w_+)\big)\\
w=(u,\eta)&\longmapsto \Pi(w):=\wp(u)\;,\\[1ex]
\Pi:\M(w_-,w_+)&\pf \mathcal{N}\big(\Pi(w_-),\Pi(w_+)\big)\\
[w]&\longmapsto [\Pi(w)]\;.
\eea
\end{Lemma}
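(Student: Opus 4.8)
The plan is to show that the projection $\wp$ commutes with both Floer equations, thereby mapping solutions to solutions, and then to verify that the homotopy condition \eqref{eq:Gamma_condition_for_grad_trajectories} defining the target moduli space is preserved. First I would unwind the Floer equation \eqref{eqn:Floer_eqn_for_A} for $\A_f^\tau$ by applying $\wp_*$ to both components. The key algebraic fact is the block-triangular structure of the almost complex structure,
\beq
J_t=\begin{pmatrix} i & B_t\\ 0 & j_t\end{pmatrix}\;,
\eeq
which by design satisfies $\wp_*\circ J_t=j_t\circ\wp_*$ on the horizontal distribution $H$; indeed the lower-right entry is exactly $j_t$ and $\wp_*$ kills the vertical part $V$. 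Applying $\wp_*$ to the first line of \eqref{eqn:Floer_eqn_for_A} and using $\wp_*(X_{\mu_\tau})=0$ (established in the proof of Lemma \ref{lem:crit_A_f}) together with $\wp_*(X_F)=X_f$, one obtains precisely the Floer equation \eqref{eqn:Floer_eqn_for_a} for $q=\wp(u)$. The term $\eta X_{\mu_\tau}$ disappears under $\wp_*$, which is why the auxiliary variable $\eta$ does not appear in the downstairs equation.

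Next I would check convergence and the endpoint assignments. A finite energy solution $w=(u,\eta)$ converges at $\pm\infty$ to critical points $w_\pm\in\Crit(\A_f^\tau)$; applying $\wp$ and using Lemma \ref{lem:projection_of_crit_points_are_crit_points} shows that $\wp(u)$ converges to $\Pi(w_\pm)\in\Crit(\a_f)$. Finite energy of $\wp(u)$ follows since the horizontal energy is dominated by the total energy (the vertical contribution is nonnegative), so $\Pi(w)$ indeed lies in the expected space of finite energy trajectories. For the second map it remains only to note that the shift $\R$-action commutes with $\wp_*$, so $\Pi$ descends to the quotients $\M(w_-,w_+)\to\Nm(\Pi(w_-),\Pi(w_+))$; this step is purely formal.

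The step requiring the most care is verifying the homotopy constraint \eqref{eq:Gamma_condition_for_grad_trajectories}. Upstairs one has $[(-\bar u_-)\#u\#\bar u_+]=0\in\Gamma_E$, and one must deduce the analogous vanishing in $\Gamma_M$ for the projected cappings. Here I would invoke the identification $\Gamma_E\cong\Gamma_M$ via $\wp_*$ recorded in Remark \ref{rmk:Gamma_E=Gamma_M}, valid under either hypothesis of Theorem \ref{thm:main}; since $\wp_*$ sends the glued sphere $(-\bar u_-)\#u\#\bar u_+$ to $(-\wp(\bar u_-))\#\wp(u)\#\wp(\bar u_+)$ and is an isomorphism on the relevant quotients, the class vanishes downstairs as well. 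The genuine subtlety is bookkeeping of cappings: the notational abuse in the definition of $\Mh(w_-,w_+)$ bundles the capping data into the equivalence class, so one must confirm that the chosen capping $\wp(\bar u_\pm)$ of $\Pi(w_\pm)$ matches the capping implicit in $\Nh(\Pi(w_-),\Pi(w_+))$. I expect this to be the main obstacle, though it is resolved entirely by the naturality of the isomorphism $\Gamma_E\cong\Gamma_M$ rather than by any analytic input.
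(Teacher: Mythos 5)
Your argument is correct and rests on exactly the same three facts the paper cites in its one-line proof: $\wp_*\circ J_t=j_t\circ\wp_*$ from the upper-triangular form of $J_t$, $\wp_*(X_{\mu_\tau})=0$, and $\wp_*(X_F)=X_f$. The extra details you supply on energy, asymptotics, and the capping bookkeeping via $\Gamma_E\cong\Gamma_M$ are all consistent with the paper's (terser) treatment.
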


\begin{proof}
This follows immediately from the fact that $\wp_*(X_F)=X_f$, $\wp_*(X_{\mu_\tau})=0$ and the specific form of $J_t=\begin{pmatrix}
 i & B_t\\
 0 & j_t
\end{pmatrix}$, i.e.~$\wp_*\circ J=j\circ\wp_*$
\end{proof}

\subsection{Transversality}

We recall that $\j$ is the space of $S^1$-families of compatible almost complex structures on $(M,\om)$. We denote by
\beq
\j_{\mathrm{reg}}(f)\subset\j
\eeq 
the subset of $j\in\j$ with the following two properties.
\begin{itemize}
\item All finite energy solutions of the Floer equation for $\a_f$ with respect to $j$ are regular, i.e.~the operator obtained by linearizing the Floer equation is a surjective Fredholm operator for all finite energy solutions.
\item For every $t\in S^1$ all \emph{simple} $j_t$-holomorphic spheres are regular, i.e.~the operator obtained by linearizing the holomorphic sphere equation is a surjective Fredholm operator.
\end{itemize}
According to \cite{Floer_Hofer_Salamon_Transversality_in_elliptic_Morse_theory_for_the_symplectic_action} and \cite[Chapter 2]{McDuff_Salamon_J_holomorphic_curves_and_symplectic_topology} the subset $\j_{\mathrm{reg}}$ is of second category. For every $j\in\j_{\mathrm{reg}}(f)$ the moduli space $\Nh(q_-,q_+)$ is a smooth manifold of dimension
\beq\label{eq:dim_of_Nh}
\dim\Nh(q_-,q_+)=\CZ^M(q_+)-\CZ^M(q_-)\;.
\eeq
For $j\in\j_{\mathrm{reg}}(f)$ we denote by
\beq
\B_{\mathrm{reg}}(j)\subset\B(j)
\eeq
the subset of $B\in\B(j)$ with the following two properties
\begin{itemize}
\item All finite energy solutions of the Floer equation for $\A^\tau_f$ with respect to the corresponding $J$ are regular.
\item For every $t\in S^1$ all \emph{simple} $J_t$-holomorphic spheres are regular. 
\end{itemize}
We refer to  \cite{Abbo_Merry_Floer_homology_on_the_time_energy_extended_phase_space} for details on the linearization of the Rabinowitz Floer equations. For $B\in\B_{\mathrm{reg}}(j)$ the moduli space $\Mh(w_-,w_+)$ is a smooth manifold of dimension
\beq
\dim\Mh(w_-,w_+)=\mu(w_+)-\mu(w_-)\;.
\eeq
The next proposition shows that this class of almost complex structures is sufficiently large.
\begin{Prop}\label{prop:regular_J_is_generic}
 For all $j\in\j_{\mathrm{reg}}(f)$ the set $\B_{\mathrm{reg}}(j)\subset\B(j)$ is of second category.
\end{Prop}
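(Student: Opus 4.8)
The plan is to run the standard Sard--Smale argument relative to the \emph{restricted} perturbation space $\B^T(j)$, exploiting the product structure $\wp_*\circ J=j\circ\wp_*$ to reduce everything to a transversality statement in the \emph{vertical} direction. First I would fix a separable Banach completion of $\B^T(j)$ (say with Floer's $C^\varepsilon$-norm), write $J_B$ for the almost complex structure associated to $B$, and form for each pair $w_\pm$ the universal moduli space
\[
\widehat{\mathcal M}^{\mathrm{univ}}(w_-,w_+):=\big\{(w,B)\ \big|\ B\in\B^T(j),\ w\in\Mh(w_-,w_+)\text{ for }J=J_B\big\},
\]
together with its analogue for simple $J_B$-holomorphic spheres. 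As usual it suffices to prove that the vertical differential of the defining section is surjective at every $(w,B)$: then $\widehat{\mathcal M}^{\mathrm{univ}}$ is a Banach manifold, its projection to $\B^T(j)$ is Fredholm, and Sard--Smale yields a residual set of regular $B$. Intersecting the countably many residual sets (over all $w_\pm$, all sphere classes, and the two equation types) produces $\B_{\mathrm{reg}}(j)$.

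The reduction rests on the triangular structure. Since $J_B$ is block upper-triangular for $TE\cong V\oplus H$ and $\wp$ is $(J_B,j)$-holomorphic, the linearized operator $D_w$ is itself upper-triangular, and its horizontal block is the pull-back of the linearized Floer operator of $\a_f$ at $q:=\wp(u)$, cf.\ Lemma \ref{lem:projection_of_grad_traj_are_grad_traj}. Because $j\in\j_{\mathrm{reg}}(f)$ and $q$ is a finite-energy $\a_f$-trajectory, that block is surjective, so every cokernel element is represented by $\zeta=(\zeta^V,\hat\eta)$ in the vertical-plus-Lagrange-multiplier directions, and it suffices to kill $\zeta^V$. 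An admissible variation $\delta B$ alters $J_B$ only off the diagonal, hence perturbs the Floer section by the \emph{vertical} vector $\delta B_t(u)\,\xi^H$, where $\xi^H\in H$ is the horizontal part of $\p_tu-\eta X_{\mu_\tau}-X_F$; since $X_{\mu_\tau}=R$ is vertical one has $\wp_*\xi^H=\p_tq-X_f(q)$. The admissible $\delta B$ are the compactly supported complex anti-linear maps $H\to V$ vanishing over $\wp^{-1}(\U)$, and pointwise such maps carry any nonzero $\xi^H$ to an arbitrary element of $V$; so wherever $\xi^H\neq0$ and $u\notin\wp^{-1}(\U)$ the perturbation can be aligned with $\zeta^V$ in the $\m$-inner product.

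To turn this pointwise freedom into a nontrivial $L^2$-pairing I would localize using somewhere-injectivity of the \emph{projected} curve $q=\wp(u)$ in $M$: if $q$ is nonconstant it has a dense set of injective points by Floer--Hofer--Salamon \cite{Floer_Hofer_Salamon_Transversality_in_elliptic_Morse_theory_for_the_symplectic_action}, and as $\p_tq-X_f(q)$ vanishes only on a meagre set one finds an injective point $(s_0,t_0)$ with $\xi^H(s_0,t_0)\neq0$ and $q(s_0,t_0)\notin\overline\U$. Supporting $\delta B$ over $\wp^{-1}(W)$ for a small ball $W\ni q(s_0,t_0)$ disjoint from $\U$—whose $u$-preimage is concentrated near $(s_0,t_0)$ by injectivity of $q$—gives a perturbation pairing nontrivially with $\zeta^V$, forcing $\zeta^V(s_0,t_0)=0$. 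Varying the injective point shows $\zeta^V\equiv0$ on the open set $\{\wp(u)\notin\overline\U\}$; as $\zeta$ solves the elliptic equation $D_w^{*}\zeta=0$, unique continuation propagates $\zeta^V\equiv0$ everywhere, whereupon the triangular structure yields $\hat\eta=0$ and $\zeta^H=0$. The same scheme handles simple spheres: a nonconstant simple $J_B$-sphere projects to a nonconstant $j$-holomorphic sphere (a sphere with constant projection lies in a fibre $\cong\C$ and is therefore constant), which cannot be confined to a ball of $\U$ since $\om$ is exact there.

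The hard part will be the forbidden region $\wp^{-1}(\U)$, where $B\equiv0$ and no perturbation is available. For objects that \emph{leave} $\wp^{-1}(\U)$ this is harmless, since unique continuation carries $\zeta\equiv0$ from the perturbable region into the fibre over $\U$. The delicate cases are those contained entirely in $\wp^{-1}(\U)$: if $q=\wp(u)$ is nonconstant yet confined to $\U$ one must still exclude a hidden vertical cokernel, while if $q$ is constant the trajectory lies in a single fibre over a point of $\Crit(f)$, where $J_B=\mathrm{diag}(i,j)$ is split and integrable and transversality has to be read off from the explicit fibre model—the $S^1\subset\C$ computation underlying Lemma \ref{lemma:index_of_fiber}, combined with the Morse--Bott scheme of Bourgeois--Oancea—rather than obtained by perturbing $B$. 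Establishing that the first situation does not arise, so that every non-fibre object meets $\{\wp(u)\notin\overline\U\}$, and that the fibre model is automatically regular, is the step I expect to require the most care.
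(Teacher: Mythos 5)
Your overall strategy --- reduce to vertical surjectivity via the upper-triangular structure, perturb $B$ at injective points of the projected curve lying outside $\U$, and use horizontal injectivity for simple spheres --- is exactly the paper's. But the two situations you defer at the end as ``requiring the most care'' are precisely where the paper's proof does its real work, and your proposal leaves both of them open, so as written the argument does not close up: the Sard--Smale scheme only yields surjectivity at solutions that admit a perturbable point.

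The first gap is the non-constant trajectory whose projection stays inside $\U$. The paper \emph{excludes} this case by a topological argument you are missing: each component of $\U$ is a ball contracting onto a critical point of $f$, so a projected trajectory $\Pi(w)$ confined to $\U$ connects the same critical point of $f$ to itself and, being contained in a contractible set, does not change the capping class, i.e.\ $A=A\#\Pi(w)\in\Gamma_M$. Hence $\Pi(w)$ is a finite-energy trajectory of $\a_f$ from a critical point of $\a_f$ to \emph{itself} (cappings included), has zero energy, and is constant --- a contradiction. So every non-constant $\Pi(w)$ leaves $\U$ and Lemma \ref{lem:B_hats_for_transversality} applies. The second gap is the fibre case: for trajectories with constant projection the paper does not rework the $S^1\subset\C$ model by hand; these are vortices and their transversality (in all of $E$, independently of $B$, which vanishes there anyway) is quoted from \cite[Proposition A.1]{Albers_Frauenfelder_Bubbles_and_Onis}. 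With those two inputs supplied, the rest of your argument (taking $\hat j=0$, so that the admissible variations are the compactly supported complex antilinear maps $\hat B:H\to V$ vanishing over $\wp^{-1}(\U)$, and the positivity-of-energy argument showing a non-constant projected sphere cannot be trapped in a ball of $\U$) matches the paper and is fine.
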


\begin{proof}
We recall the splitting $TE\cong V\oplus H$, cf.~Remark \ref{rmk:splitting_TE=V+H}. Thus, we may consider the linearization of the Floer equation \eqref{eqn:Floer_eqn_for_A} in vertical resp.~horizontal directions $V$ resp.~$H$. Since the projection $\wp$ induces an isomorphism $\wp_*:(H,d\alpha)\to (TM,\om)$ and $j\in\j_{\mathrm{reg}}$, it follows from Lemma \ref{lem:projection_of_grad_traj_are_grad_traj} that the linearization is already surjective in horizontal directions. To show that it is for generic choice of $B$ also surjective in vertical directions we distinguish two cases for $w\in\Mh(w_-,w_+)$.\\[1ex]
\noindent\textit{Case 1. $\Pi(w)$ is non-constant}. We claim that $\Pi(w)$ necessarily leaves the neighborhood $\mathcal{U}$. We recall that $\mathcal{U}$ is the union of disjoint neighborhoods of all critical points of $f$ where each such neighborhood contracts onto a critical point, see the discussion before equation \eqref{eqn:conditions_of_B(J)}. If $\Pi(w)$ is contained in $\mathcal{U}$ then it has to be a gradient trajectory connecting the same critical point of $f$ with cappings $A$ and $A\#\Pi(w)$. Since $\Pi(w)$ is contained in $\mathcal{U}$ the two cappings are homotopic to each other: $A=A\#\Pi(w)\in\Gamma_M$. Thus, $\Pi(w)$ is a gradient trajectory from a critical point of $\a_f$ to itself (including cappings) and therefore $\Pi(w)$ is constant which is a contradiction. Therefore $\Pi(w)$ necessarily leaves the neighborhood $\mathcal{U}$. By \cite[Theorem
4.3]{Floer_Hofer_Salamon_Transversality_in_elliptic_Morse_theory_for_the_symplectic_action} the set of regular points for $\Pi(w)$ is open and dense.  Since $\Pi(w)$ leaves $\U$ we may apply Lemma \ref{lem:B_hats_for_transversality} below. Thus a standard argument, see for instance \cite[Section 5]{Floer_Hofer_Salamon_Transversality_in_elliptic_Morse_theory_for_the_symplectic_action} or \cite[Chapter 3]{McDuff_Salamon_J_holomorphic_curves_and_symplectic_topology} establishes that for generic $B
\in\B(j)$ the linearization of the gradient flow equation
is also vertically surjective. \\[1ex]
\noindent\textit{Case 2.  $\Pi(w)$ is constant.}
But then $w$ is a vortex and vortices are by \cite[Proposition A.1]{Albers_Frauenfelder_Bubbles_and_Onis} always transverse. In fact vortices are independent of the perturbation $B\in\B(j)$.\\[1ex]
It remains to prove that generically all \emph{simple} $J_t$-holomorphic spheres, $t\in S^1$, are regular. If we were not to restrict to upper triangular $J$ this is a standard result which relies on the fact that simple curves are somewhere injective, see \cite[Chapter 2]{McDuff_Salamon_J_holomorphic_curves_and_symplectic_topology} for details. 

We argue again as above. Due to the definition of $\j_\mathrm{reg}$ the linearization of a simple $J_t$-holomorphic sphere is already surjective in horizontal directions. For vertical directions we use the notion of somewhere horizontally injective points, see Definition \ref{def:horizontally_injective} below.
The important observation is that horizontally injective points still form a dense subset, see Lemma \ref{lem:simple_implies_horizontally_injective}. Therefore, we can apply again Lemma \ref{lem:B_hats_for_transversality} to conclude that for generic $B\in\B(j)$ all simple $J_t$-holomorphic curves are regular.
\end{proof}

\begin{Rmk}\label{rmk:definition_of_set_of_acs_we_ultimately_use}
We recall that $\B^T(j)\subset\B(j)$ denotes the non-empty open convex subset consisting of those $B\in\B(j)$ for which the corresponding $J$ is tame. From now on we always choose $j\in\j_{\mathrm{reg}}$ and $B\in\B^T_{\mathrm{reg}}(j):=\B_{\mathrm{reg}}(j)\cap \B^T(j)$.
\end{Rmk}

We recall the following notions and Lemmas considered in \cite{Frauenfelder_Habilitation,Albers_Frauenfelder_Bubbles_and_Onis}.

\begin{Def}\label{def:horizontally_injective}
A $J_t$-holomorphic curve $u:S^2 \to E$ is called \emph{somewhere horizontally injective} if there exists $z \in S^2$
such that
\beq
d^hu(z):=\wp_*\circ du(z) \neq 0, \quad u^{-1}(u(z))=\{z\}\;.
\eeq 
\end{Def}

It remains to prove the following two Lemmas.

\begin{Lemma}\label{lem:simple_implies_horizontally_injective}
Assume that $u:S^2 \to E$ is a simple $J_t$-holomorphic
curve. Then $u$ is horizontally injective on a dense set.
\end{Lemma}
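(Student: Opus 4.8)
The plan is to combine the definition of "somewhere horizontally injective" (Definition \ref{def:horizontally_injective}) with the standard fact that a simple $J_t$-holomorphic sphere is somewhere injective on a dense set of points. Since $u:S^2\to E$ is simple, by \cite[Chapter 2]{McDuff_Salamon_J_holomorphic_curves_and_symplectic_topology} there is a dense (in fact open and dense) subset $Z\subset S^2$ of injective points, meaning $du(z)\neq 0$ and $u^{-1}(u(z))=\{z\}$ for each $z\in Z$. The horizontal injectivity condition adds the extra requirement $d^hu(z)=\wp_*\circ du(z)\neq 0$, so the only way the lemma could fail is if $u$ were \emph{nowhere} horizontally injective, i.e.~if $d^hu$ vanished on all of the dense injective set $Z$, hence by continuity on all of $S^2$.

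The key step is therefore to rule out $d^hu\equiv 0$. Suppose $d^hu=\wp_*\circ du\equiv 0$ on $S^2$. Then $du(z)\in V_{u(z)}$ for every $z$, i.e.~the image of $u$ is everywhere tangent to the vertical distribution $V$. Recalling the splitting $TE\cong V\oplus H$ from Remark \ref{rmk:splitting_TE=V+H}, where $V$ is spanned by the Reeb field $R$ and the radial field $r\partial_r$, this forces the composition $\wp\circ u:S^2\to M$ to have vanishing differential everywhere, so $\wp\circ u$ is constant; the image of $u$ lies entirely inside a single fiber $\wp^{-1}(m)\cong\C$ of the line bundle. But a non-constant $J_t$-holomorphic sphere cannot be contained in a single fiber: the fibers of $E\to M$ are complex lines $\cong\C$, which are $J_t$-invariant (by the upper-triangular form of $J_t$ the vertical subbundle $V$ is $J_t$-invariant) and \emph{contractible}, so any holomorphic sphere mapping into one fiber must represent a trivial class and be constant by the energy identity $\int u^*\Om=0$ together with tameness of $J_t$. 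This contradicts $u$ being simple (in particular non-constant). Hence $d^hu\not\equiv 0$.

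Since $d^hu$ is a continuous section that does not vanish identically, the set $\{z\in S^2 : d^hu(z)\neq 0\}$ is open and dense (its complement is the zero set of a section satisfying a first-order elliptic equation, hence has empty interior by unique continuation, or more elementarily is a proper closed real-analytic-type subset). Intersecting this open dense set with the open dense set $Z$ of injective points yields a dense set of points $z$ satisfying both $d^hu(z)\neq 0$ and $u^{-1}(u(z))=\{z\}$, which is precisely the condition that $u$ be horizontally injective. This proves the lemma.

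The main obstacle I expect is making the step $d^hu\equiv 0\Rightarrow$ contradiction fully rigorous: one must argue carefully that a holomorphic curve tangent to the vertical distribution actually lies in a single fiber (this uses that $V$ is $J_t$-invariant and integrable, as its leaves are the fibers), and then that a non-constant holomorphic sphere cannot be confined to a contractible fiber. The density/openness of the horizontally injective set then follows from a routine unique-continuation argument applied to $d^hu$, which satisfies a Cauchy-Riemann type equation on the horizontal component.
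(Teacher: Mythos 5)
Your proof is correct and follows essentially the same route as the paper: both arguments reduce the statement to showing that $\wp\circ u$ is a non-constant $j_t$-holomorphic sphere (because a simple $u$ cannot lie in a single fiber) and then intersect the dense set of injective points of $u$ with the dense set of regular points of $\wp\circ u$. The paper simply quotes that these complements are countable resp.\ finite (\cite[Prop.~2.5.1, Lemma~2.4.1]{McDuff_Salamon_J_holomorphic_curves_and_symplectic_topology}) where you invoke unique continuation, but the content is identical.
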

\begin{proof}
We denote by $I(u)\subset S^2$ the subset of injective
points of $u$,  by $R(\wp(u)) \subset S^2$ the subset of
nonsingular points of $\wp(u)$ and  by $S(u)\subset S^2$ the subset of horizontally
injective points of $u$. Then
\begin{equation}\label{hi}
S(u)=I(u) \cap R(\wp(u)).
\end{equation}
We first observe that $\wp(u)$ is $j_t$-holomorphic. We claim that $\wp(u):S^2\to M$ is not constant, since otherwise $u$ would lie in one fiber and hence itself must be constant, contradicting the assumption that it is simple. Therefore, it follows from \cite[Lemma 2.4.1]{McDuff_Salamon_J_holomorphic_curves_and_symplectic_topology} that the complement of
$R(\wp(u))$ is finite. Moreover, it follows from \cite[Proposition
2.5.1]{McDuff_Salamon_J_holomorphic_curves_and_symplectic_topology} that the complement of $I(u)$ is countable. Hence by (\ref{hi}) the complement of $S(u)$ is countable. In particular, $S(u)$ is dense in $S^2$. 
\end{proof}

\begin{Lemma}\label{lem:B_hats_for_transversality}
We fix $e\in E\setminus\wp^{-1}(\U)$, $(v,h)\in T_eE=V_e\oplus H_e$ with $h\neq0$ and $t_0\in S^1$. Moreover, we fix $j\in\j$ and $B\in\B(j)$. Then there exist $\widehat B\in \Gamma_0(S^1\times E, \mathrm{L}(H,V))$ and $\widehat j\in T_j\j$ with
\beq\label{eqn:crazy_B_eqn}
\left\{
\begin{aligned}
&\widehat B(t_0,e)h=v\\
&i\widehat B+B\widehat j+\widehat B j=0\;.
\end{aligned}
\right. 
\end{equation}
\end{Lemma}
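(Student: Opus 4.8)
The plan is to solve \eqref{eqn:crazy_B_eqn} by making the cheapest possible choice for the variation of the almost complex structure on the base, namely $\widehat j=0\in T_j\j$ (the zero endomorphism is always a legitimate tangent vector, as it trivially anti-commutes with $j$). With $\widehat j=0$ the second equation in \eqref{eqn:crazy_B_eqn} collapses to the single requirement
\[
i\widehat B+\widehat B j=0,
\]
which says precisely that, at every $(t,x)$, the map $\widehat B_t(x)\colon H_x\to V_x$ is complex \emph{anti}-linear with respect to the structure $j_t$ on $H_x$ (transported by $\wp_*$) and the standard structure $i$ on $V_x\cong\C$. Thus it suffices to produce a compactly supported, fibrewise anti-linear section $\widehat B$, vanishing on $\wp^{-1}(\U)$ as in \eqref{eqn:conditions_of_B(J)}, whose value at the single point $(t_0,e)$ sends $h$ to $v$.

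First I would fix the value pointwise. Since $h\neq0$, the complex line $\C\cdot h\subset H_e$, spanned over $\R$ by $h$ and $j_{t_0}h$, carries a unique anti-linear map into $V_e$ with $h\mapsto v$, given by $\lambda h\mapsto\bar\lambda v$; extending it by zero on a complex complement of $\C\cdot h$ in $H_e$ yields an anti-linear map $L\colon H_e\to V_e$ with $Lh=v$. Next I would globalize $L$ while enforcing the anti-linearity constraint everywhere. Because $e\notin\wp^{-1}(\U)$, I can pick a smooth cut-off $\beta\colon S^1\times E\to[0,1]$ supported in a small neighbourhood of $(t_0,e)$ disjoint from $\wp^{-1}(\U)$ with $\beta(t_0,e)=1$, together with any smooth extension $\widetilde L$ of $L$ over that neighbourhood. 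Setting $C:=\beta\widetilde L$ and then projecting onto its anti-linear part,
\[
\widehat B:=\tfrac12\big(C+iCj\big),
\]
produces a compactly supported section that is fibrewise anti-linear, i.e.~$i\widehat B+\widehat B j=0$, at every point and vanishes on $\wp^{-1}(\U)$. The decisive check is that this projection does not disturb the prescribed value: $\tfrac12(\,\cdot\,+i\,\cdot\,j)$ is a fibrewise linear projection onto anti-linear maps, so it fixes $L$, giving $\widehat B(t_0,e)=L$ and hence $\widehat B(t_0,e)h=v$. Together with $\widehat j=0$ this solves \eqref{eqn:crazy_B_eqn}.

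The one genuine difficulty the argument must address is reconciling the three demands on $\widehat B$ at once — the prescribed value at a single point, the pointwise anti-linearity relative to the varying structure $j_t$, and compact support away from $\wp^{-1}(\U)$ — since a naive parallel transport of $L$ would fail the anti-linearity condition at neighbouring points where $j_t$ differs. The anti-linear projection $\tfrac12(C+iCj)$ resolves this cleanly, because as a projection it both preserves the value $L$ at $(t_0,e)$ and does not enlarge the support of $C$; taking $\widehat j=0$ removes any coupling to the variation of the base structure. Beyond these verifications I expect no further obstacle.
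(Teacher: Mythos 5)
Your argument is correct, and it takes a genuinely different (and in fact leaner) route than the paper. The paper does \emph{not} set $\widehat j=0$: it extends $h$ and $v$ to local sections, prescribes a nonzero $\widehat j$ on $\mathrm{span}\{h,jh\}$ (with $\widehat j h:=v$, $\widehat j jh:=-jv$) and zero on the $j$-invariant orthogonal complement, then defines $\widehat B$ by $\widehat Bh:=v$, $\widehat Bjh:=-(i\widehat B+B\widehat j)h$, $\widehat B|_{\mathrm{span}\{h,jh\}^\perp}:=0$, and verifies $i\widehat B+B\widehat j+\widehat Bj=0$ by a case-by-case computation on this splitting; note that its $\widehat B$ is coupled to $B$ through the term $B\widehat j$. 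Your choice $\widehat j=0$ collapses the constraint to pure anti-linearity of $\widehat B$, which you then achieve by the standard anti-linear projection $C\mapsto\tfrac12(C+iCj)$ of a cut-off extension of the pointwise solution $L(\lambda h)=\bar\lambda v$; the key observations — that this projection satisfies $i\widehat B+\widehat Bj=0$ identically, fixes the already anti-linear value $L$ at $(t_0,e)$, and does not enlarge the support — are all correct. Your version buys two things: the construction is independent of $B$, and, more importantly, it produces a perturbation tangent to the fixed fiber $\B(j)$ with $j\in\j_{\mathrm{reg}}(f)$ unchanged, which is exactly what the Sard--Smale argument in Proposition \ref{prop:regular_J_is_generic} uses (there $j$ is frozen and only $B$ is varied); the paper's nonzero $\widehat j$ corresponds to perturbing the pair $(j,B)$. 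One small caveat: your additional requirement that $\widehat B$ vanish on $\wp^{-1}(\U)$ forces the support of the cut-off to avoid $\wp^{-1}(\U)$, which needs $e\notin\overline{\wp^{-1}(\U)}$ rather than merely $e\notin\wp^{-1}(\U)$; this is harmless for the lemma as stated (which imposes no such vanishing) and is the same implicit shrinking of $\U$ the paper relies on, but it is worth flagging.
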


\begin{Rmk}
The second equation asserts that the pair $(\widehat j,\widehat B)$ corresponds to a tangent vector of the space of almost complex structures we are considering.
\end{Rmk}
   
\begin{proof}[Proof of Lemma \ref{lem:B_hats_for_transversality}]
First we extend $h$ resp.~$v$ to sections also denoted by $h$ resp.~$v$ supported in a small neighborhood of $e$. 
Then we define $\widehat j$  by
\beq
\widehat j h:=v,\quad \widehat j jh:=-jv, \quad\widehat j|_{\text{span}\{h,j h\}^\perp}:=0\;,
\eeq
where $\perp$ refers to the metric $\om(\cdot,j\cdot)$ on $H$. We point out that $\text{span}\{h,j h\}^\perp$ is $j$-invariant. Then $\widehat j$ satisfies the equation 
\beq
\widehat j j+j \widehat j=0
\eeq
that is, $\widehat j\in T_j\j$. Next we define $\widehat B$ by
\bea
\widehat B h&:=v\\
\widehat B jh&:=-(i\widehat B+B\widehat j)h\\
\widehat B|_{\text{span}\{h,j h\}^\perp}&:=0\;.
\eea
The first equation in \eqref{eqn:crazy_B_eqn} holds by construction. We show the second equation. We have
\bea
(i\widehat B+B\widehat j+\widehat B j)h=0
\eea
by the very definition of $\widehat B jh$. Moreover, 
\bea
(i\widehat B+B\widehat j+\widehat B j)jh&=i \widehat B jh + B\widehat j jh - \widehat Bh\\
&=-i (i\widehat B+B\widehat j)h - Bj\widehat jh - \widehat Bh\\
&=\widehat Bh - iB\widehat jh - Bj\widehat jh - \widehat Bh\\
&=- iB\widehat jh - Bj\widehat jh  \\
&=- (iB + Bj)\widehat jh  \\
&=0
\eea
where we used $jj=-1$, $\widehat B jh=-(i\widehat B+B\widehat j)h$, $\widehat j j+j \widehat j=0$, $ii=-1$ and finally $iB + Bj=0$, see \eqref{eqn:conditions_of_B(J)}. Finally, 
\bea
(i\widehat B+B\widehat j+\widehat B j)|_{\text{span}\{h,j h\}^\perp}=0
\eea
since $\text{span}\{h,j h\}^\perp$ is $j$-invariant and $\widehat B$ and $\widehat j$ vanish on it.
\end{proof}

This completes the discussion on transversality. 

\subsection{Compactness}\label{sec:compactness}

In this subsection we discuss the appropriate compactness results for the moduli spaces $\M(w_-,w_+)$ of unparametrized gradient flow trajectories. This follows the usual scheme of Rabinowitz Floer homology, that is, we need to establish the following for a sequence $(u_\nu,\eta_\nu)\in\M(w_-,w_+)$, $\nu\in\N$.

\begin{enumerate}\renewcommand{\theenumi}{\roman{enumi}}\itemsep=1ex
 \item A uniform $C^0$-bound for the loops $u_\nu$.
 \item A uniform $C^0$-bound on the Lagrange multipliers $\eta_\nu$.
 \item A uniform bound on the derivatives of the loops $u_\nu$.
\end{enumerate}
The first two are proved in \cite[Proposition 6.2 \& 6.4]{Frauenfelder_Habilitation}. We point out that the set-up in \cite{Frauenfelder_Habilitation} is the same as ours except for the following. Frauenfelder's assumption of $(E,\Om)$ being very negative is replaced by our assumption of semi-positivity. Moreover, the almost complex structures used are of the form $J=
\begin{pmatrix}
i&0\\0&j 
\end{pmatrix}$, i.e.~$B=0$. The uniform $C^0$-bound for the loops is based on a maximum principle which continues to hold since in our setting $B$ has compact support. The uniform $C^0$-bound on the Lagrange multipliers relies on a ``fundamental lemma'' which continues to hold verbatim. 

To prove a uniform bound on the derivatives of the loops we argue by contradiction, i.e.~by bubbling-off analysis. Indeed, since we already established uniform $C^0$-bounds for the loops and the Lagrange multipliers a blow-up of derivatives of $u_\nu$ leads to $J_t$-holomorphic spheres inside $E$. We claim that since we assume that $E$ is semi-positive we can apply the results of Hofer-Salamon \cite{Hofer_Salamon_Floer_homology_and_Novikov_rings} and ensure that for generic $S^1$-family almost complex structure $J$ of the form $J_t=
\begin{pmatrix}
i & B_t \\
0 & j_t
\end{pmatrix}
$ with $B_t\in\B^T_{\mathrm{reg}}(j)$, $t\in S^1$ the moduli spaces $\Mh(w_-,w_+)$ are compact up to breaking as long as $\CZ(w_+)-\CZ(w_-)\leq2$. In \cite{Hofer_Salamon_Floer_homology_and_Novikov_rings} Hofer-Salamon argue that bubbling-off of $J_t$-holomorphic spheres of Chern number at least 2 never occurs for index reasons. Moreover, they rule out bubbling-off of $J_t$-holomorphic sphere with Chern number less than 2 by carefully studying moduli spaces of $J_t$-holomorphic spheres. The crucial input is that for \emph{simple} holomorphic spheres the linearized operator is a surjective Fredholm operator, see \cite[Theorem 2.2]{Hofer_Salamon_Floer_homology_and_Novikov_rings}. We establishes the corresponding result for our restricted class of almost complex structures in Proposition \ref{prop:regular_J_is_generic}. Therefore, the results in \cite{Hofer_Salamon_Floer_homology_and_Novikov_rings} apply to the Floer equation for $\A_f^\tau$ and we conclude that the moduli spaces $\Mh(w_-,w_+)$ are compact up to breaking as long as $\mu(w_+)-\mu(w_-)\leq2$.

\subsection{Rabinowitz Floer homology}

We define Rabinowitz Floer homology with the help of Novikov rings. Alternative approaches are via mixed direct/inverse limits. How these relate has been studied in \cite{Cieliebak_Frauenfelder_Morse_homology_on_noncompact_manifolds}. The current approach is as in the original article \cite{Cieliebak_Frauenfelder_Restrictions_to_displaceable_exact_contact_embeddings}.

The spaces $\c_k$ and $\c$ of critical point of $\A_f^\tau$ were defined in Definition \ref{def:index_of_critical_points}. The vector space $\RFC_*(\A_f^\tau)$, graded by $\mu$ (see \eqref{eq:mu-index}),  is the set of all formal linear combinations 
\beq
\xi=\sum_{w\in \c} a_ww\;,\quad a_w\in\Z/2,
\eeq
subject to the Novikov condition
\beq\label{eqn:Novikov_condition}
\forall \kappa\in\R:\; \#\big\{w\in\c\mid a_w\neq0,\;\A_f^\tau(w)\geq \kappa\big\}<\infty\;.
\eeq
It is a module over the Novikov ring 
\beq
\Lambda_E:=\bigg\{\sum_{A\in\Gamma_E} n_A e^A\mid n_A\in\Z/2,\;\forall \kappa\in\R\colon \#\big\{A\in\Gamma_E\mid n_A\neq0,\;\Om(A)\geq \kappa\big\}<\infty\bigg\}\;.
\eeq
The multiplicative structure on $\Lambda_E$ is given by
\bean
\left(\sum_{A\in \Gamma_E}n_Ae^A\right)\cdot\left(\sum_{B\in \Gamma_E}m_Be^B\right)
                &:=\sum_A\sum_B(n_A\cdot m_B)\,e^{A+B}=\sum_C\Big(\sum_An_A\cdot m_{C-A}\Big)e^{C}
\eea
and the action of $\Lambda_E$ on $\RFC_k(\A_f^\tau)$ by
\beq
\left(\sum_{A\in\Gamma_E} n_A e^A\right)\cdot\left(\sum_{w\in \c} a_ww\right):=\sum_w \bigg(\sum_A n_A\cdot a_{w\#-A}\bigg)w\;,
\eeq
where we use the following notation. If $w=([u,B],\eta)$ then $w\#-A=([u,B-A],\eta)$. The differential $\p$ on $\RFC_*(\A_f^\tau)$ is defined by
\bea
\p:\RFC_k(\A_f^\tau)&\pf \RFC_{k-1}(\A_f^\tau)\\
\p\, w&:=\sum_{z\in\c_{k-1}} \#_2\M(z,w)\,z\;.
\eea
The compactness results described in section \ref{sec:compactness} imply that $\M(z,w)$ is a finite set and $ \#_2\M(z,w)\in\Z/2$ denotes its parity. Moreover, compactness up to breaking implies $\p\circ\p=0$. The Rabinowitz Floer homology is then defined by
\beq
\RFH_k(\A_f^\tau):=\H_k\big(\RFC_*(\A_f^\tau),\p\big),\quad k\in\tfrac{1}{2}+\Z\;.
\eeq

\begin{Rmk}
To define $\RFH_*(\A_f^\tau)$ we made auxiliary choices, notably $\tau$ and $f$. The assumption that $f$ is $C^2$-small is not necessary for defining $\RFH_*(\A_f^\tau)$, see \cite{Albers_Frauenfelder_Bubbles_and_Onis} for more details. Nevertheless, we decided to make this assumption throughout this article. The choices of $\tau$ and $f$ become relevant in the proof of Theorem \ref{thm:main}. The methods of \cite{Cieliebak_Frauenfelder_Restrictions_to_displaceable_exact_contact_embeddings} show that $\RFH_*(\A_f^\tau)$ is independent of all these choices.
\end{Rmk}

\begin{Rmk}
We recall that we restrict ourselves to  the class of almost complex structures $J$ of the form $J=
\begin{pmatrix}
i & B \\
0 & j
\end{pmatrix}
$ with $B\in\B^T_{\mathrm{reg}}(j)$. It is unclear to us whether it is possible to extend the definition of $\RFH_*(\A_f^\tau)$ beyond this class of almost complex structures. We crucially rely on Frauenfelder's result, namely that the fact that the projection of the Floer equation of $\A_f^\tau$ gives the Floer equation of $\a_f$ on $M$ can be used to obtain uniform $C^0$-bounds for the Lagrange multiplier. For this $\wp$ needs to be $J$-$j$-holomorphic. 
\end{Rmk}

\section{A filtration and the proof of vanishing}

We use the fact that $\RFC_*(\A_f^\tau)$ admits a filtration. For $l\in\Z$ we set
\beq
\RFC_k^l(\A_f^\tau):=\left\{\sum_{w} a_ww\in\RFC_k(\A_f^\tau)\mid \CZ^M\big(\Pi(w)\big)= l\right\}
\eeq
and 
\beq
\RFC_k^{\leq l}(\A_f^\tau):=\left\{\sum_w a_ww\in\RFC_k(\A_f^\tau)\mid \CZ^M\big(\Pi(w)\big)\leq l\right\}\;,
\eeq
where we recall that $A\in\Gamma_E\cong\Gamma_M$, see Remark \ref{rmk:Gamma_E=Gamma_M}.

\begin{Lemma}\label{lem:properties_of_p=sum_p_i}
\beq
\p \Big(\RFC_k^{\leq l}(\A_f^\tau)\Big)\subset \RFC_{k-1}^{\leq l}(\A_f^\tau) 
\eeq
hence we can decompose 
\beq
\p=\sum_{i\geq0} \p_i=\p_0+\p_1+\cdots
\eeq
with
\bea
\p_i:\RFC_k^l(\A_f^\tau)&\pf \RFC_{k-1}^{l-i}(\A_f^\tau)\\
\p_iw&:=\!\!\!\!\!\!\!\!\sum_{\substack{z\in\c_{k-1}\\\CZ^M(\Pi(z))=\CZ^M(\Pi(w))-i}}\!\!\!\!\!\!\!\! \#_2\M(z,w)z\;.
\eea
\end{Lemma}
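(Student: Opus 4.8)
The plan is to exploit the projection map $\Pi$ from Lemma \ref{lem:projection_of_grad_traj_are_grad_traj}: the filtration is defined purely through $\CZ^M\circ\Pi$, and every Floer trajectory counted by $\partial$ projects to a Floer trajectory of the classical action functional $\a_f$ on $M$. Since such projected trajectories carry a manifestly nonnegative virtual dimension governed by $\CZ^M$, the differential cannot raise the filtration level. First I would record, using Remark \ref{rmk:Gamma_E=Gamma_M}, that the identification $\Gamma_E\cong\Gamma_M$ makes $\CZ^M(\Pi(w))$ a well-defined function on the generators $w\in\c$, so that the subspaces $\RFC_k^{\leq l}(\A_f^\tau)$ and the would-be pieces $\partial_i$ are meaningful.

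For the core estimate, fix a generator $w\in\c_k$ and suppose that $z\in\c_{k-1}$ occurs in $\partial w$ with nonzero coefficient, i.e.~$\M(z,w)\neq\emptyset$. Choosing any $[\tilde w]\in\M(z,w)$ and applying Lemma \ref{lem:projection_of_grad_traj_are_grad_traj} produces an element $[\Pi(\tilde w)]\in\Nm\big(\Pi(z),\Pi(w)\big)$; in particular this moduli space of projected trajectories is nonempty, and by the $\Gamma$-condition \eqref{eq:Gamma_condition_for_grad_trajectories} together with $\Gamma_E\cong\Gamma_M$ its asymptotics are exactly the capped critical points $\Pi(z),\Pi(w)\in\Crit(\a_f)$.

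I would then split into two cases. If $\Pi(z)=\Pi(w)$ in $\Crit(\a_f)\cong\Crit(f)\times\Gamma_M$, then $\CZ^M(\Pi(z))=\CZ^M(\Pi(w))$ and the drop is $i=0$ (the projected trajectory is forced to be constant by vanishing of the $\a_f$-action difference). Otherwise the projected trajectory is non-constant, and since $j\in\j_{\mathrm{reg}}(f)$ (Remark \ref{rmk:definition_of_set_of_acs_we_ultimately_use}) the moduli space $\Nm(\Pi(z),\Pi(w))=\Nh(\Pi(z),\Pi(w))/\R$ is cut out transversally, so its nonemptiness forces, via the dimension formula \eqref{eq:dim_of_Nh}, the inequality $\CZ^M(\Pi(w))-\CZ^M(\Pi(z))-1\geq0$, hence $\CZ^M(\Pi(z))\leq\CZ^M(\Pi(w))$. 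In either case $\CZ^M(\Pi(z))\leq\CZ^M(\Pi(w))$, so every generator appearing in $\partial w$ has filtration level at most that of $w$. This gives $\partial\big(\RFC_k^{\leq l}(\A_f^\tau)\big)\subset\RFC_{k-1}^{\leq l}(\A_f^\tau)$, and grouping the summands of $\partial w$ according to the value $i:=\CZ^M(\Pi(w))-\CZ^M(\Pi(z))\geq0$ yields precisely the decomposition $\partial=\sum_{i\geq0}\partial_i$ with $\partial_i$ as stated.

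The step I expect to require the most care is not the inequality itself but the bookkeeping of cappings: one must verify that projecting a trajectory sends the $\Gamma_E$-homotopy datum recorded in \eqref{eq:Gamma_condition_for_grad_trajectories} to the corresponding $\Gamma_M$-datum for $\a_f$, so that the asymptotic limits in $\Nm$ are genuinely the capped points $\Pi(z)$ and $\Pi(w)$ and the index formula \eqref{eq:dim_of_Nh} applies verbatim. This is exactly where the isomorphism $\Gamma_E\cong\Gamma_M$ of Remark \ref{rmk:Gamma_E=Gamma_M} is essential; once it is in place, the remainder is a direct application of Lemma \ref{lem:projection_of_grad_traj_are_grad_traj} and the transversality already established for $\a_f$.
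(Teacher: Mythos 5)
Your argument is correct and is exactly the paper's proof: the authors dispose of this lemma in one line as ``a direct consequence of Lemma \ref{lem:projection_of_grad_traj_are_grad_traj} together with \eqref{eq:dim_of_Nh}'', which is precisely the projection-plus-dimension-count you carry out (including the split into the constant and non-constant cases for the projected trajectory). Your elaboration of the capping bookkeeping via $\Gamma_E\cong\Gamma_M$ is a correct filling-in of what the paper leaves implicit.
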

\begin{proof}
This is a direct consequence of Lemma \ref{lem:projection_of_grad_traj_are_grad_traj} together with \eqref{eq:dim_of_Nh}.
\end{proof}

\begin{Rmk}\label{rmk:p_i_drops_upper_degree_by_i}
From $\p^2=0$ and the filtration we derive for every $i\geq0$ the equation
\beq\label{eqn:magic_p_i_p_j}
\sum_{j=0}^{i}\p_j\p_{i-j}=0\;.
\eeq
E.g.~$\p_0\p_0=0$, $\p_0\p_1+\p_1\p_0=0$ etc. In particular, $\p_0$ is a differential.
\end{Rmk}

The main idea for proving Theorem \ref{thm:main} is that $\p_0$ counts solutions of the Floer equation \eqref{eqn:Floer_eqn_for_A} which are entirely contained inside fibers of $\wp$ over critical points of $f$. Thus the homology of $\p_0$ is the sum of $\Crit(f)\times\Gamma_M$-many copies of the Rabinowitz Floer homology of $\big(\Sigma_\tau\cap\wp^{-1}(q),\wp^{-1}(q)\big)\cong(S^1,\C)$, $q\in\Crit(f)$, each of which vanishes. 

\begin{Prop}\label{prop:p_0_counts_differentials_in_a_fiber}
The differential $\p_0$ counts precisely the solutions $w=(u,\eta)$ of the Floer equation \eqref{eqn:Floer_eqn_for_A} with image contained entirely in a fiber over some critical point of $f$. That is, there exists $q\in\Crit(f)$ such that $u(\R\times S^1)\subset \wp^{-1}(q)$. Moreover, if $w_\pm=([u_\pm,A_\pm],\eta_\pm)\in\Crit(\A_f^\tau)$ are the asymptotic limits of $w$ then
\beq
A_-=A_+\in\Gamma_E\;.
\eeq
\end{Prop}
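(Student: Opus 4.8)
The plan is to project the Floer equation to $M$ and then read off the statement from the index bookkeeping of Lemma~\ref{lem:properties_of_p=sum_p_i}. By construction $\p_0$ counts the elements of the zero-dimensional moduli spaces $\M(w_-,w_+)$ with $w_-\in\c_{k-1}$, $w_+\in\c_k$ and $\CZ^M(\Pi(w_-))=\CZ^M(\Pi(w_+))$. I fix such a trajectory $w=(u,\eta)$ with asymptotic limits $w_\pm=([u_\pm,A_\pm],\eta_\pm)$ and set $q_\pm:=\wp(u_\pm)$, so that $\Pi(w_\pm)=[q_\pm,A_\pm]\in\Crit(\a_f)$ by Lemma~\ref{lem:projection_of_crit_points_are_crit_points}.

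First I would show that the projected trajectory is constant. By Lemma~\ref{lem:projection_of_grad_traj_are_grad_traj} the projection $\Pi(w)=\wp(u)$ is a finite-energy solution of the Floer equation~\eqref{eqn:Floer_eqn_for_a} for $\a_f$ lying in $\Nh\big(\Pi(w_-),\Pi(w_+)\big)$. Since $j\in\j_{\mathrm{reg}}$ (Remark~\ref{rmk:definition_of_set_of_acs_we_ultimately_use}) this moduli space is a smooth manifold, and by~\eqref{eq:dim_of_Nh} its dimension equals $\CZ^M(\Pi(w_+))-\CZ^M(\Pi(w_-))=0$. Now a non-constant solution of~\eqref{eqn:Floer_eqn_for_a} is non-stationary—for $C^2$-small $f$ the critical points of $\a_f$ are constant loops, cf.~Lemma~\ref{lem:crit_A_f}—and hence carries a free $\R$-action by $s$-translation, so it cannot sit inside a zero-dimensional manifold. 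Therefore $\wp(u)$ is constant, $u(\R\times S^1)$ is contained in a single fiber $\wp^{-1}(q)$, and $q=q_\pm\in\Crit(f)$. This proves that $\p_0$ counts only fiber trajectories.

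Conversely, if $w=(u,\eta)$ is any finite-energy solution of~\eqref{eqn:Floer_eqn_for_A} with $u(\R\times S^1)\subset\wp^{-1}(q)$ for some $q\in\Crit(f)$, then $\wp(u)\equiv q$, so $\Pi(w_-)=\Pi(w_+)$ and in particular $\CZ^M(\Pi(w_-))=\CZ^M(\Pi(w_+))$; hence $w$ is counted by $\p_0$. For the capping equality I would argue that constancy of $\wp(u)$ already forces the two asymptotic limits to agree in $\Crit(\a_f)$, i.e.~$[q,A_-]=[q,A_+]$, which gives $A_-=A_+$ in $\Gamma_M\cong\Gamma_E$. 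Equivalently, projecting the homotopy condition~\eqref{eq:Gamma_condition_for_grad_trajectories} along the isomorphism $\wp_*:\Gamma_E\stackrel{\cong}{\pf}\Gamma_M$ and using $\wp(u)\equiv q$ collapses the sphere $(-\bar u_-)\#u\#\bar u_+$ to a representative of $A_+-A_-$, which must therefore vanish.

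The heart of the argument—and the only genuinely non-formal point—is the claim that the projected trajectory must be constant rather than a non-trivial index-zero Floer trajectory on $M$. This rests on two inputs already in place: that $\wp$ honestly intertwines the two Floer equations (Lemma~\ref{lem:projection_of_grad_traj_are_grad_traj}, which itself uses the upper-triangular shape of $J_t$), and that $j$ is chosen regular so that $\Nh$ is cut out transversally with the expected dimension. The remaining bookkeeping with cappings is where I would be most careful, making sure the identification $\Gamma_E\cong\Gamma_M$ of Remark~\ref{rmk:Gamma_E=Gamma_M} is applied consistently so that the equality $A_-=A_+$ is read off in the correct group.
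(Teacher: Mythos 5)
Your proposal is correct and follows essentially the same route as the paper: project the trajectory via Lemma \ref{lem:projection_of_grad_traj_are_grad_traj}, use $j\in\j_{\mathrm{reg}}$ and the dimension formula \eqref{eq:dim_of_Nh} to force $\wp(u)$ to be constant, and then deduce $A_-=A_+$ from \eqref{eq:Gamma_condition_for_grad_trajectories} under the identification $\Gamma_E\cong\Gamma_M$. The only additions are that you spell out why a zero-dimensional $\Nh$ excludes non-constant solutions (the free $\R$-action) and you record the converse inclusion explicitly, both of which the paper leaves implicit.
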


\begin{proof}
Let  $w=(u,\eta)$ be a gradient flow line from $w_-=([u_-,A_-],\eta_-)$ to $w_+=([u_+,A_+],\eta_+)$ with
\beq\label{eqn:cz=cz}
\CZ^M\big(\Pi(w_+)\big)=\CZ^M\big(\Pi(w_-)\big)\;.
\eeq
Using Lemma \ref{lem:projection_of_grad_traj_are_grad_traj} we see that 
\beq
\wp(u)\in\widehat{\mathcal{N}}\big(\Pi(w_+),\Pi(w_-)\big)\;.
\eeq
According to \eqref{eq:dim_of_Nh}, equation \eqref{eqn:cz=cz} implies that that 
\beq
\dim\widehat{\mathcal{N}}\big(\Pi(w_+),\Pi(w_-)\big)=0\;,
\eeq
which in turn implies that $\wp(u)$ is $s$-independent, i.e.~constant $\wp(u)=q\in\Crit(f)$, see Lemma \ref{lem:projection_of_crit_points_are_crit_points}. In other words, $u(\R\times S^1)\subset \wp^{-1}(q)$. Moreover, in view of \eqref{eq:Gamma_condition_for_grad_trajectories}, we have 
\beq
A_-=A_+\in\Gamma_M\cong\Gamma_E\;.
\eeq
This finishes the proof.
\end{proof}

\begin{Cor}\label{cor:p_0_homology=0}
\beq
\H_k\big(\RFC_*(\A_f^\tau),\p_0\big)=0\quad \forall k\in\tfrac{1}{2}+\Z\;.
\eeq 
\end{Cor}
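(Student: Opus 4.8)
The plan is to decompose the complex $(\RFC_*(\A_f^\tau),\p_0)$ according to the data $(q,A)\in\Crit(f)\times\Gamma_M$ that is preserved by $\p_0$. By Proposition \ref{prop:p_0_counts_differentials_in_a_fiber}, the differential $\p_0$ only connects critical points $w_\pm=([u_\pm,A_\pm],\eta_\pm)$ with $\wp(u_\pm)=q\in\Crit(f)$ the \emph{same} critical point and $A_-=A_+\in\Gamma_E\cong\Gamma_M$. Hence the $\p_0$-complex splits as a direct sum, indexed by $(q,A)$, of subcomplexes each generated by the critical points sitting in the single fiber $\wp^{-1}(q)\cong\C$ over $q$ with fixed capping-class $A$. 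Concretely, I would write
\beq
\big(\RFC_*(\A_f^\tau),\p_0\big)\cong\bigoplus_{(q,A)\in\Crit(f)\times\Gamma_M}\big(C_*(q,A),\p_0^{(q,A)}\big),
\eeq
so that computing the homology reduces to computing the homology of each summand.

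Next I would identify each summand with the Rabinowitz Floer complex of the model pair $(S^1,\C)$, i.e.\ of the unit circle $\Sigma_\tau\cap\wp^{-1}(q)\cong S^1$ inside the fiber $\wp^{-1}(q)\cong\C$. Since $\p_0$ counts exactly those Floer trajectories whose image lies in $\wp^{-1}(q)$, and on such a fiber the restricted Floer equation \eqref{eqn:Floer_eqn_for_A} is precisely the Rabinowitz Floer equation for the standard contact circle in $\C$ (the horizontal part of $J_t$ is the standard $i$ and the $B_t$-term vanishes over $\U\supset q$), the summand $(C_*(q,A),\p_0^{(q,A)})$ is canonically the Rabinowitz Floer complex of $(S^1,\C)$, with a Morse grading shift coming from $\Morse(q,f)$ and a capping shift coming from $A$. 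The generators are the capped Reeb orbits $([v^n,d_v^n\#A],\eta)$ together with the $\pm$ splitting of each $S^1$-family from Definition \ref{def:index_of_critical_points}, exactly as in the Cieliebak--Frauenfelder computation of $\RFH(S^1,\C)$.

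The key fact to invoke is that the Rabinowitz Floer homology of the standard circle in $\C$ vanishes: $\RFH(S^1,\C)=0$. This is the basic computation from \cite{Cieliebak_Frauenfelder_Restrictions_to_displaceable_exact_contact_embeddings}, where displaceability of $S^1$ inside $\C$ forces $\RFH=0$; equivalently one checks directly that the Conley--Zehnder indices of the forward and backward Reeb orbits interlace so that the boundary operator is an isomorphism on homology onto zero. Applying this vanishing to every summand gives $\H_*(C_*(q,A),\p_0^{(q,A)})=0$ for all $(q,A)$, and taking the direct sum over $\Crit(f)\times\Gamma_M$ yields $\H_k(\RFC_*(\A_f^\tau),\p_0)=0$ for all $k\in\tfrac12+\Z$, as claimed.

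I expect the main obstacle to be making the identification of each summand with the genuine Rabinowitz Floer complex of $(S^1,\C)$ fully rigorous at the level of gradings and the $\pm$-perturbation data: one must check that the index formula \eqref{eq:mu-index} together with Lemma \ref{lemma:index_of_fiber} reproduces exactly the index conventions under which $\RFH(S^1,\C)=0$ was computed, and that the auxiliary perfect Morse function $h$ splits each critical $S^1$-family compatibly with the fiberwise model. A subtler point is ensuring that the Novikov completion does not interfere: the direct-sum decomposition of the complex must be compatible with the Novikov condition \eqref{eqn:Novikov_condition}, but since $\p_0$ preserves the capping class $A$ and hence the action within each summand, the completion respects the splitting and the vanishing of each summand's homology passes to the completed complex.
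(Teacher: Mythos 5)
Your proposal is correct and follows essentially the same route as the paper: decompose $(\RFC_*(\A_f^\tau),\p_0)$ into the subcomplexes $\RFC_*(q,A)$ indexed by $\Crit(f)\times\Gamma_M$ using Proposition \ref{prop:p_0_counts_differentials_in_a_fiber}, identify each with the Rabinowitz Floer complex of $(S^1,\C)$, and invoke its acyclicity from \cite{Cieliebak_Frauenfelder_Restrictions_to_displaceable_exact_contact_embeddings}. The only place where your justification is looser than the paper's is the Novikov step: the reason the completion does not interfere is not that $\p_0$ preserves the action within a summand (it shifts it by $\tau$), but that one can write down the primitive of each generator explicitly --- $\p_0$ sends $\big([v^{n+1},A]^-,n+1-f(q)\big)$ to $\big([v^{n},A]^+,n-f(q)\big)$ and kills the $+$ generators --- so the term-by-term primitive of an infinite Novikov cycle has action exactly $\tau$ larger on each term by \eqref{eq:computation_of_action}, and this uniform shift is what guarantees that the primitive again satisfies \eqref{eqn:Novikov_condition}.
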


\begin{proof}
For $q\in\Crit (f)$ we fix an identification 
\beq
\big(\Sigma_\tau\cap\wp^{-1}(q),\wp^{-1}(q)\big)\cong(S^1_\tau,\C)
\eeq
together with the symplectic form, its primitive and the complex structure $i$. Here $S^1_\tau$ is the circle bounding a disk of area $\pi\tau^2$. For $A\in\Gamma_M$ we denote by 
\beq
\RFC_*(q,A)
\eeq
the vector space generated over $\Z/2$ by critical points of the form $\big([u,A]^\pm,\eta\big)\in \c\subset\Crit(\A_f^\tau)$ with $\wp(u)=q$. Proposition \ref{prop:p_0_counts_differentials_in_a_fiber} implies that $\RFC_*(q,A)$ is a $\p_0$-subcomplex of $\RFC_*(\A_f^\tau)$. With the above identification we see that
\beq
\big(\RFC_{k+2c_1^{TE}(A)+\frac{1}{2}\dim M}(q,A),\p_0\big)=\big(\RFC_k(S^1,\C),\p\big)\;.
\eeq
Let $v$ be the primitive Reeb orbit over $q$ then all generators are of the form $\big([v^n,A]^\pm, \eta=n-f(q)\big)$. Since $\RFH_*(S^1,\C)=0$ due to  \cite{Cieliebak_Frauenfelder_Restrictions_to_displaceable_exact_contact_embeddings,Albers_Frauenfelder_Leafwise_intersections_and_RFH}  and $\mu\big([v^{n-1},A]^\pm, n-1-f(q)\big)+2=\mu\big([v^n,A]^\pm, n-f(q)\big)$ from Lemma \ref{lemma:index_of_fiber}, we know that 
\bea\label{eq:differential_p_0}
\p_0\big([v^n,A]^-, n-f(q)\big)&=\big([v^{n-1},A]^+,n-1-f(q)\big)\\
\p_0\big([v^n,A]^+,n-f(q)\big)&=0\;.
\eea
Let $\xi=\sum_w a_ww\in\RFC_k(\A_f^\tau)$ with $\p_0\xi=0$, i.e.~$\sum_w a_w\p_0w=0$. If $a_w\neq0$ then $w$ is of the form $\big([v^n,A]^+,n-f(q)\big)$ and we let $w'$ be the corresponding element $\big([v^{n+1},A]^-,n+1-f(q)\big)$. Then
\beq
\xi':=\sum_w a_w w'
\eeq
satisfies the Novikov condition, i.e.~$\xi'\in\RFC_{k+1}(\A_f^\tau)$, since $\A_f^\tau(w')=\A_f^\tau(w)+\tau$ due to \eqref{eq:computation_of_action}. From \eqref{eq:differential_p_0}, $\p_0\xi'=\xi$ and this completes the proof.
\end{proof}

\begin{Lemma}\label{lem:c_great_equal_0_implies_CZ_downstairs_bounded_from_above} 
We assume now that $c_1^{TM}=c\om:\pi_2(M)\to\Z$. 
\begin{itemize}
\item In case $c=0$ we have for all $\tau>0$ 
\beq
\RFC_k(\A_f^\tau)=\bigoplus_{l=-\frac12\dim M}^{\frac12\dim M}\RFC_k^{l}(\A_f^\tau)
\eeq 
and
\beq
\p_n=0\quad\forall n\geq\dim M+1\;.
\eeq
\item In case $c\geq1$ we assume $(c-1)\tau<1$. Then a formal sum $\xi=\sum_w a_w w$, $a_w\in\Z/2$, $w\in\c_k$ satisfies the Novikov condition
\beq
\forall \kappa\in\R:\; \#\big\{w\in\c_k\mid a_w\neq0,\;\A_f^\tau(w)\geq \kappa\big\}<\infty
\eeq
if and only if 
\beq
\forall \kappa\in\R:\; \#\big\{w\in\c_k\mid a_w\neq0,\;\CZ^M\big(\wp(w)\big)\geq \kappa\big\}<\infty\;.
\eeq 
In particular, for all $\xi\in\RFC_k(\A_f^\tau)$ there exists $l(\xi)\in\Z$ with
\beq\label{eqn:xi_in_l(xi)_part}
\xi\in\RFC_k^{\leq l(\xi)}(\A_f^\tau)\;.
\eeq
\end{itemize}
\end{Lemma}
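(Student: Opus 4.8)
The entire lemma is driven by the action computation \eqref{eq:computation_of_action}, which gives for a critical point $w=([v^n,A],\eta)$ over $q\in\Crit(f)$ the value
\beq\label{eqn:sketch_action_recall}
\A_f^\tau(w)=\tau n+\om(A)-(\tau+1)f(q),\qquad n=\eta+f(q).
\eeq
Since $f$ is $C^2$-small, the term $(\tau+1)f(q)$ is uniformly bounded over the finitely many $q\in\Crit(f)$, so it plays no role in any finiteness statement and I will absorb it. The plan is to relate this action to the downstairs Conley-Zehnder index. Under the assumption $c_1^{TM}=c\om$ we have $\CZ^M(\Pi(w))=-\Morse(q,f)+\tfrac12\dim M+2c\,\om(A)$, and from Lemma \ref{lemma:index_of_fiber} together with Definition \ref{def:index_of_critical_points} the upstairs index is $\mu(w)=\CZ^E(v^n,A)-\Morse(q,f)+\tfrac12\dim M\pm\tfrac12$ with $\CZ^E(v^n,A)=2n+2c_1^{TE}(A)=2n+2(c-1)\om(A)$. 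Thus I would first record that $\CZ^M(\Pi(w))$ is determined, up to the bounded Morse term, by $\om(A)$ alone.

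\textbf{The case $c=0$.}

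Here $\CZ^M(\Pi(w))=-\Morse(q,f)+\tfrac12\dim M$, so it takes only finitely many values, all lying in $[-\tfrac12\dim M,\tfrac12\dim M]$ since the Morse index ranges in $[0,\dim M]$. This gives the direct-sum decomposition immediately. For the vanishing $\p_n=0$ for $n\geq\dim M+1$: the operator $\p_n$ drops $\CZ^M\circ\Pi$ by exactly $n$ by Lemma \ref{lem:properties_of_p=sum_p_i}, but the total spread of $\CZ^M\circ\Pi$ is at most $\dim M$, so any $n$ exceeding $\dim M$ connects incomparable filtration levels and the count is empty. I would phrase this as: if $\CZ^M(\Pi(z))=\CZ^M(\Pi(w))-n$ with both in $[-\tfrac12\dim M,\tfrac12\dim M]$, then $n\leq\dim M$.

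\textbf{The case $c\geq1$.}

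This is where the hypothesis $(c-1)\tau<1$ enters and is the main obstacle. The goal is to show the Novikov condition on the action is equivalent to the corresponding condition on $\CZ^M\circ\Pi$, which amounts to showing that, at \emph{fixed} upstairs index $\mu=k$, the action and $\CZ^M\circ\Pi=-\Morse(q,f)+\tfrac12\dim M+2c\,\om(A)$ are \emph{comonotone} in the relevant variables. Fixing $k$ fixes $\mu(w)=2n+2(c-1)\om(A)-\Morse(q,f)+\tfrac12\dim M\pm\tfrac12$, so on $\c_k$ the quantity $2n+2(c-1)\om(A)$ is constant up to the bounded Morse/sign terms; hence $n=\text{const}-(c-1)\om(A)+O(1)$. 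Substituting into \eqref{eqn:sketch_action_recall}, the action becomes (up to bounded terms)
\beq
\A_f^\tau(w)=\tau n+\om(A)+O(1)=\big(1-(c-1)\tau\big)\om(A)+O(1).
\eeq
The assumption $(c-1)\tau<1$ makes the coefficient $1-(c-1)\tau$ strictly positive, so $\A_f^\tau(w)\to+\infty$ iff $\om(A)\to+\infty$ iff $\CZ^M(\Pi(w))=2c\,\om(A)+O(1)\to+\infty$ (using $c\geq1>0$). This equivalence of ``going to $+\infty$'' is exactly what makes the two finiteness/Novikov conditions equivalent. The delicate point I must handle carefully is that $k$ is fixed but $q$ ranges over a finite set and the $\pm$ sign varies, so all the $O(1)$ error terms are genuinely bounded; I would make this explicit by noting $\Morse(q,f)\in[0,\dim M]$ and $f$ bounded. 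Finally, \eqref{eqn:xi_in_l(xi)_part} follows because any $\xi\in\RFC_k$ satisfies the Novikov condition, hence by the equivalence just proved $\CZ^M(\Pi(w))$ is bounded above on $\{w:a_w\neq0\}$, giving the desired $l(\xi)$.
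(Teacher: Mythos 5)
Your proposal is correct and follows essentially the same route as the paper: both express $n$ in terms of $k$ and $\om(A)$ via the index formula, substitute into the action to obtain $\A_f^\tau(w)=\bigl(1-(c-1)\tau\bigr)\om(A)+O(1)$, and compare with $\CZ^M(\Pi(w))=2c\,\om(A)+O(1)$, with the $c=0$ case read off directly from the boundedness of $\CZ^M\circ\Pi$. The only cosmetic difference is that you track the bounded terms with $O(1)$ notation where the paper names the error $e$ and bounds $|e|\leq\tfrac12\dim M+\tfrac12$ explicitly.
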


\begin{proof}
If we write $w=\big([v^{n},A]^\pm,\eta\big)\in\c_k$ then according to \eqref{eq:computation_of_action} and Definition \ref{def:index_of_critical_points}
\bea
\A_f^\tau(w)&=n\tau+\om(A)-(\tau+1)f(\wp(v))\\[1ex]
k=\mu(w)&=2n+2c_1^{TE}(A)-\Morse(\wp(v);f)+\tfrac12 \dim M\big(\pm\tfrac12 )\\
&=2n+2(c-1)\om(A)-\Morse(\wp(v);f)+\tfrac12 \dim M\big(\pm\tfrac12 )\;.
\eea
We solve the second equation for $n$
\bea
n&=\tfrac12 k-(c-1)\om(A)+\tfrac12 \big[\Morse(\wp(v);f)-\tfrac12 \dim M\big(\pm\tfrac12 )\big]
\eea
and abbreviate $e:=\Morse(\wp(v);f)-\tfrac12 \dim M\big(\pm\tfrac12 )$. In particular, $|e|\leq\tfrac12 \dim M+\tfrac12 $. Thus, we can rewrite the action value as
\bea\label{eqn:formula_A}
\A_f^\tau(w)&=n\tau+\om(A)-(\tau+1)f(\wp(v))\\
&=\big(\tfrac12 k-(c-1)\om(A)+\tfrac12 e\big)\tau+\om(A)-(\tau+1)f(\wp(v))\\
&=\big(1-(c-1)\tau\big)\om(A)+\tfrac12 \big(k+e\big)\tau-(\tau+1)f(\wp(v))\;.
\eea
Next we observe that
\bea\label{eqn:formula_CZ}
\CZ^M(\Pi(w))&=-\Morse(\wp(v);f)+\tfrac12 \dim M+2c_1^{TM}(A)\\
&=-\Morse(\wp(v);f)+\tfrac12 \dim M+2c\om(A)\;.
\eea
In case $c\geq 1$ and $(c-1)\tau<1$ equations \eqref{eqn:formula_A} and \eqref{eqn:formula_CZ} imply the if-and-only-if statement of the Lemma. The statement \eqref{eqn:xi_in_l(xi)_part} follows from the if-part of the if-and-only-if statement since $\xi\in\RFC_k(\A_f^\tau)$ satisfies the Novikov condition by the very definition of $\RFC$.

The case $c=0$ follows immediately from equation  \eqref{eqn:formula_CZ}.
\end{proof}

We are now in the position to prove Theorem \ref{thm:main}. We treat the symplectically aspherical case last and assume now that $c_1^{TM}=c\om$. We first consider the case $c\geq0$. If $c=0$ we assume that $(E,\Om)$ is semi-positive.

\begin{proof}[Proof of Theorem \ref{thm:main} for $c\geq0$]
We fix $\xi\in\RFC_k(\A_f^\tau)$ with
\beq
\p \xi=0\;.
\eeq
Our aim is to construct $\theta\in\RFC_{k+1}(\A_f^\tau)$ with $\p\theta=\xi$. We split $\xi$ as follows.
\beq
\xi=\sum_{l=-\infty}^{l(\xi)} \xi_l \quad\text{with}\quad\xi_l\in\RFC_k^l(\A_f^\tau) \;,
\eeq 
where $l(\xi)\in\Z$ is taken from Lemma \ref{lem:c_great_equal_0_implies_CZ_downstairs_bounded_from_above}. If $c=0$ we set $l(\xi):=\tfrac12 \dim M$. We expand $\p\xi=0$ according to $\p=\sum_{i\geq0}\p_i$ and collect terms in $\RFC_{k-1}^{l(\xi)-I}(\A_f^\tau)$ for all $I\geq0$. We recall from Lemma \ref{lem:properties_of_p=sum_p_i} that $\p_i$ drops the upper degree by $i$. This leads to 
\beq\label{eqn:p_xi=0_in_long}
\sum_{i=0}^{I}\p_i\xi_{l(\xi)+i-I}=0\;,
\eeq
since $\p_i\xi_m\in\RFC_{k-1}^{l(\xi)-I}(\A^\tau_f)$ if and only if $m-i=l(\xi)-I$.\\[1ex]
\begin{Claim}\label{claim1}
 For all $l\leq l(\xi)$ there exists $\theta_l\in\RFC_{k+1}^l(\A_f^\tau)$ such that 
\beq\label{eqn:inductive_equation}
\sum_{i=0}^{I}\p_i\theta_{l(\xi)+i-I}=\xi_{l(\xi)-I}
\eeq 
holds for $I\geq0$.
 \end{Claim}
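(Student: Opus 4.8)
The plan is to prove Claim \ref{claim1} by induction on $I\geq0$, constructing the $\theta_l$ so that the ``upper-filtration-level by level'' equations \eqref{eqn:inductive_equation} are solved, with the key tool being that $\p_0$ is acyclic by Corollary \ref{cor:p_0_homology=0}. The point is that equation \eqref{eqn:inductive_equation} is a triangular (in the filtration) system: the top equation ($I=0$) reads $\p_0\theta_{l(\xi)}=\xi_{l(\xi)}$, and each subsequent equation expresses $\p_0\theta_{l(\xi)-I}$ in terms of $\xi_{l(\xi)-I}$ and the previously constructed $\theta$'s. So the whole construction reduces to repeatedly solving an equation of the shape $\p_0\theta=\zeta$ for a given $\p_0$-cycle $\zeta$.

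First I would treat the base case $I=0$. Here \eqref{eqn:inductive_equation} is $\p_0\theta_{l(\xi)}=\xi_{l(\xi)}$. To apply Corollary \ref{cor:p_0_homology=0} I must check that $\xi_{l(\xi)}$ is a $\p_0$-cycle: this follows from the $I=0$ instance of \eqref{eqn:p_xi=0_in_long}, namely $\p_0\xi_{l(\xi)}=0$. Since $\H_*(\RFC_*,\p_0)=0$ there is a $\theta_{l(\xi)}$ with $\p_0\theta_{l(\xi)}=\xi_{l(\xi)}$, which lies in $\RFC_{k+1}^{l(\xi)}$ because $\p_0$ preserves the filtration level (Lemma \ref{lem:properties_of_p=sum_p_i}). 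For the inductive step, assume $\theta_{l(\xi)},\dots,\theta_{l(\xi)-I+1}$ have been constructed satisfying \eqref{eqn:inductive_equation} for all smaller indices. I isolate the $\p_0$-term in the $I$-th equation and define
\beq
\zeta:=\xi_{l(\xi)-I}-\sum_{i=1}^{I}\p_i\theta_{l(\xi)+i-I}\in\RFC_k^{l(\xi)-I}(\A_f^\tau)\;,
\eeq
so the desired equation becomes $\p_0\theta_{l(\xi)-I}=\zeta$.

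The crux is verifying that $\zeta$ is a $\p_0$-cycle, so that Corollary \ref{cor:p_0_homology=0} again furnishes the required $\theta_{l(\xi)-I}$. The hard part will be this cycle computation: I apply $\p_0$ to $\zeta$ and use the commutation relations \eqref{eqn:magic_p_i_p_j} together with the inductive hypothesis \eqref{eqn:inductive_equation} for the already-constructed $\theta$'s. Concretely, $\p_0\zeta=\p_0\xi_{l(\xi)-I}-\sum_{i=1}^I\p_0\p_i\theta_{l(\xi)+i-I}$; one rewrites $\p_0\xi_{l(\xi)-I}$ using the $(I{-}1)$-level relation from \eqref{eqn:p_xi=0_in_long}, and rewrites each $\p_0\p_i$ via $\p_0\p_i=-\sum_{j=1}^{i}\p_j\p_{i-j}$, then substitutes the previously solved equations $\sum_{i'=0}^{I'}\p_{i'}\theta_{\bullet}=\xi_{\bullet}$; after reindexing the double sum all contributions must cancel to give $\p_0\zeta=0$. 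This bookkeeping is the only genuine obstacle — it is purely formal once one is careful about which filtration level each term lands in, and it is exactly the algebraic shadow of $\p^2=0$. Having produced all $\theta_l$ for $l\leq l(\xi)$, I would finally assemble $\theta:=\sum_{l\leq l(\xi)}\theta_l$; summing \eqref{eqn:inductive_equation} over $I$ yields $\p\theta=\xi$, provided $\theta$ satisfies the Novikov condition, which in the case $c\geq1$ is guaranteed by Lemma \ref{lem:c_great_equal_0_implies_CZ_downstairs_bounded_from_above} (finiteness in each filtration level translates into the Novikov condition) and in the case $c=0$ by the finiteness of the filtration range there.
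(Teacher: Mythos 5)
Your proposal is correct and follows the paper's proof essentially verbatim: the same induction on $I$, the same reduction to solving $\p_0\theta_{l(\xi)-I}=\zeta$ via Corollary \ref{cor:p_0_homology=0}, and the same cancellation mechanism combining \eqref{eqn:magic_p_i_p_j}, the reindexing of the double sum, the induction hypothesis, and \eqref{eqn:p_xi=0_in_long}. The only slip is that the instance of \eqref{eqn:p_xi=0_in_long} used at the end of the cycle computation is the level-$I$ relation (the induction hypothesis supplies the levels up to $I-1$), which is a labeling detail rather than a gap.
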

\begin{proof}[Proof of Claim \ref{claim1}]
We inductively construct $\theta_l$. For $I=0$ equation \eqref{eqn:p_xi=0_in_long} reduces to 
\beq
\p_0 \xi_{l(\xi)}=0\;.
\eeq
Corollary \ref{cor:p_0_homology=0} implies that there exists $\theta_{l(\xi)}\in\RFC_{k+1}^{l(\xi)}(\A_f^\tau)$ with
\beq
\p_0\theta_{l(\xi)}=\xi_{l(\xi)}\;.
\eeq
Now assume that we already constructed $\theta_{l(\xi)},\ldots, \theta_{l(\xi)-(I-1)}$ satisfying equation \eqref{eqn:inductive_equation}. Then we compute 
\bea
\p_0\bigg(\xi_{l(\xi)-I}-\sum_{i=1}^I\p_i\theta_{l(\xi)+i-I}\bigg)&\stackrel{\phantom{\ast(\ast)\ast}}{=}\p_0\xi_{l(\xi)-I}-\sum_{i=1}^I\p_0\p_i\theta_{l(\xi)+i-I}\\
&\stackrel{\phantom{\ast}(\ast)\phantom{\ast}}{=}\p_0\xi_{l(\xi)-I}+\sum_{i=1}^I\sum_{j=1}^i\p_j\p_{i-j}\theta_{l(\xi)+i-I}\\
&\,\stackrel{(\ast\ast)}{=}\p_0\xi_{l(\xi)-I}+\sum_{j=1}^I\p_j\left(\sum_{i=j}^I\p_{i-j}\theta_{l(\xi)+i-I}\right)\\
&\stackrel{\phantom{\ast(\ast)\ast}}{=}\p_0\xi_{l(\xi)-I}+\sum_{j=1}^I\p_j\left(\sum_{i=0}^{I-j}\p_{i}\theta_{l(\xi)+i-(I-j)}\right)\\
&\stackrel{(\ast\ast\ast)}{=}\p_0\xi_{l(\xi)-I}+\sum_{j=1}^I\p_j\xi_{l(\xi)+j-I}\\
&\stackrel{\phantom{\ast(\ast)\ast}}{=}\sum_{j=0}^I\p_j\xi_{l(\xi)+j-I}\\
&\stackrel{\phantom{\ast(\ast)\ast}}{=}0\;.
\eea
Here we used equation \eqref{eqn:magic_p_i_p_j} in $(\ast)$, the usual relabeling $\displaystyle\sum_{i=1}^I\sum_{j=1}^i=\sum_{j=1}^I\sum_{i=j}^I$ in $(\ast\ast)$, the induction hypothesis \eqref{eqn:inductive_equation} in $(\ast\!\ast\!\ast)$ and \eqref{eqn:p_xi=0_in_long} at the end. Using again Corollary \ref{cor:p_0_homology=0} we find $\theta_{l(\xi)-I}\in\RFC_{k+1}^{l(\xi)-I}(\A_f^\tau)$ with
\beq
\p_0\theta_{l(\xi)-I}=\xi_{l(\xi)-I}-\sum_{i=1}^I\p_i\theta_{l(\xi)+i-I}\;,
\eeq
in other words
\beq\label{eqn:p_theta=xi_in_parts}
\sum_{i=0}^I\p_i\theta_{l(\xi)+i-I}=\xi_{l(\xi)-I}\;.
\eeq
This proves claim \ref{claim1}. 
\end{proof}
Now we consider
\beq
\theta:=\sum_{l=-\infty}^{l(\xi)}\theta_l\;.
\eeq
We will show that $\theta$ satisfies the Novikov condition and $\p\theta=\xi$. We need slightly different arguments for the cases $c=0$ and $c\geq1$.

If $c=0$ then $\xi=\sum_{l=-\frac12\dim M}^{\frac12\dim M} \xi_l$, see Lemma \ref{lem:c_great_equal_0_implies_CZ_downstairs_bounded_from_above}, is the sum of finitely many non-zero $\xi_l$ each of which satisfies the Novikov condition. Since $\p=\p_0+\cdots\p_{\dim M}$ we obtain  only finitely many non-zero $\theta_l$ each of which satisfies the Novikov condition by (the proof of) Corollary \ref{cor:p_0_homology=0}. Thus, $\theta=\sum\theta_l$ satisfies the Novikov condition, too.

If $c\geq1$ then Lemma \ref{lem:c_great_equal_0_implies_CZ_downstairs_bounded_from_above} implies that each $\theta_i\in\RFC_{k+1}^i(\A_f^\tau)$ is a finite sum of elements in $\c$ (as opposed to a general Novikov sum.)  Thus, using again Lemma \ref{lem:c_great_equal_0_implies_CZ_downstairs_bounded_from_above} we see that $\theta=\sum_{l=-\infty}^{l(\xi)}\theta_l$ satisfies the Novikov condition.

In both cases the equation $\p\theta=\xi$ holds by construction. Indeed, the part of 
\beq
\p\theta=\sum_{l=-\infty}^{l(\xi)}\p\theta_l=\sum_{l=-\infty}^{l(\xi)}\sum_{i=0}^\infty\underbrace{\p_i\theta_l}_{\in\RFC_k^{l-i}}\in\RFC_k(\A_f^\tau)
\eeq
in $\RFC_k^r(\A_f^\tau)$ is $\displaystyle\sum_{i=0}^{l(\xi)-r}\p_i\theta_{i+r}$. By relabeling $I=l(\xi)-r$ we compute
\bea
\p\theta&=\sum_{r=-\infty}^{l(\xi)}\sum_{i=0}^{l(\xi)-r}\p_i\theta_{i+r}\\
&=\sum_{I=0}^\infty\sum_{i=0}^{I}\p_i\theta_{l(\xi)+i-I}\\
&=\sum_{I=0}^\infty\xi_{l(\xi)-I}\\
&=\sum_{l=-\infty}^{l(\xi)}\xi_l\\
&=\xi\;,
\eea
where we used equation \eqref{eqn:p_theta=xi_in_parts} in the third equality. Thus, for every $\xi\in\RFC_k(\A^\tau_f)$ with $\p\xi=0$ we constructed $\theta\in\RFC_{k+1}(\A^\tau_f)$ with $\p\theta=\xi$. This finishes the proof.
\end{proof}

\begin{proof}[Proof of Theorem \ref{thm:main} for $2c\nu\leq-\dim M$]
In this proof we make the assumption that the Morse function $f:M\to\R$ additionally satisfies $f(M)\subset(0,1)$. We fix $\xi\in\RFC_k(\A_f^\tau)$ with
\beq
\p \xi=0\;.
\eeq
We will again construct $\theta\in\RFC_{k+1}(\A_f^\tau)$ with $\p\theta=\xi$. This time we split $\xi$ as follows.
\beq
\xi=\sum_{A\in\Gamma_M}\xi_A\quad\text{with}\quad \xi_A=\sum_{\substack{w\in\c_k \\ [\Pi(w)]\in\Crit (f)\times\{A\}}}a_ww\;.
\eeq 
\begin{Claim}\label{claim2}
If $\M\Big(\big([v,B],\hat\eta\big),\big([u,A],\eta\big)\Big)\neq\emptyset$ then $A=B\in\Gamma_M\cong\Gamma_E$. 
\end{Claim}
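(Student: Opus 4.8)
The plan is to show that any Floer trajectory for $\A_f^\tau$ between critical points lying over the \emph{same} homotopy class $A\in\Gamma_M$ of cappings, and which connects critical points whose projections carry the \emph{same} Conley-Zehnder index downstairs, must actually stay in the fiber, forcing the $\Gamma_E$-classes of the two endpoints to coincide. The key is the interplay between the index relation forced by the assumption $2c\nu\le-\dim M$ and the compatibility of $\wp$ with the Floer data established in Lemma \ref{lem:projection_of_grad_traj_are_grad_traj}.

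First I would record the constraint coming from the moduli space being non-empty. If $w=(u,\eta)$ is a trajectory from $\big([v,B],\hat\eta\big)$ to $\big([u,A],\eta\big)$, then by Lemma \ref{lem:projection_of_grad_traj_are_grad_traj} its projection $\wp(u)$ is a finite-energy solution of the Floer equation \eqref{eqn:Floer_eqn_for_a} for $\a_f$, hence lives in $\Nh\big(\Pi(w_+),\Pi(w_-)\big)$. The dimension formula \eqref{eq:dim_of_Nh} gives $\dim\Nh=\CZ^M(\Pi(w_-))-\CZ^M(\Pi(w_+))$, and for this to be non-negative we need a comparison of the two downstairs indices. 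The second step is to compute these indices explicitly in terms of $A$, $B$ using \eqref{eqn:formula_CZ}: since $c_1^{TM}=c\om$, we have $\CZ^M(\Pi(w))=-\Morse(\wp(v);f)+\tfrac12\dim M+2c\om(A)$, so the difference of the two endpoint indices is governed by $2c\,\om(B-A)$ together with a bounded Morse-index contribution lying in $[-\dim M,\dim M]$.

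The heart of the argument is then a numerical squeeze. The class $B-A$ lies in $\Gamma_M$, so $\om(B-A)\in\nu\Z$; if $B\ne A$ then $|\om(B-A)|\ge\nu$. The hypothesis $2c\nu\le-\dim M$ (note $c<0$ here) says precisely that $2c\nu$ is so negative that a single step in $\om$ changes $\CZ^M$ downstairs by at least $\dim M$ in absolute value, which cannot be compensated by the Morse-index term whose total variation is at most $\dim M$. Concretely, I would show that the dimension of $\Nh$ being $\ge0$ forces $2c\,\om(B-A)$ plus a bounded term to be $\ge 0$, and then argue that because $2c\nu\le-\dim M$, the only way to avoid a contradiction is $\om(B-A)=0$, i.e.\ $B=A$. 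One must also invoke the Novikov/finiteness considerations as in Lemma \ref{lem:c_great_equal_0_implies_CZ_downstairs_bounded_from_above} to ensure the relevant quantities are genuinely bounded.

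The main obstacle I anticipate is handling the boundary cases of the inequality $2c\nu\le-\dim M$ carefully: when equality or near-equality holds, the Morse-index term and the $\pm\tfrac12$ from Definition \ref{def:index_of_critical_points} must be tracked exactly, since the squeeze is tight. I would therefore keep the inequality chain sharp, using $|\Morse(\wp(v);f)-\tfrac12\dim M|\le\tfrac12\dim M$ and the discreteness $\om(B-A)\in\nu\Z$, to rule out any nonzero multiple of $\nu$. Once $\om(B-A)=0$ is established, the equivalence $\Gamma_M\cong\Gamma_E$ from Remark \ref{rmk:Gamma_E=Gamma_M} upgrades this to $B=A\in\Gamma_E$, which is exactly the assertion of the Claim.
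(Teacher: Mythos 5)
Your outline correctly identifies one of the two inequalities the paper exploits (the index comparison coming from $\dim\Nh\big(\Pi(w_-),\Pi(w_+)\big)=\CZ^M(\Pi(w_+))-\CZ^M(\Pi(w_-))\ge 0$), but that inequality alone does not close the argument. Write $\om(A)-\om(B)=k\nu$ with $k\in\Z$. The index inequality reads $2ck\nu\ge \Morse(\wp(u),f)-\Morse(\wp(v),f)\ge-\dim M$. For $k\ge 1$ your squeeze does bite, since $c<0$ and $2c\nu\le-\dim M$ give $k(2c\nu)\le-\dim M$, so the two bounds pinch. But for $k\le-1$ the quantity $2ck\nu=|k|\,|2c\nu|$ is \emph{positive}, the inequality is satisfied trivially, and no contradiction arises: nothing in your argument excludes $\om(A)<\om(B)$. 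To rule that out the paper uses the second, independent piece of information, namely positivity of energy for the projected trajectory, $\a_f\big(\Pi([u,A],\eta)\big)\ge\a_f\big(\Pi([v,B],\hat\eta)\big)$, i.e.\ $\om(A)-\om(B)\ge f(\wp(u))-f(\wp(v))$. This is precisely why the paper imposes the extra normalization $f(M)\subset(0,1)$ in this part of the proof: combined with the discreteness $\om(A)-\om(B)\in\nu\Z$ it forces $\om(A)\ge\om(B)$, hence $k\ge1$ whenever $A\ne B$. Your proposal never invokes the action inequality, so the case $k\le-1$ is a genuine hole.

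The second gap is the borderline case, which cannot be eliminated by ``keeping the inequality chain sharp''. When $2c\nu=-\dim M$ and $k=1$, every inequality in the chain can be an equality, so there is no numerical contradiction to be had. The paper handles this by observing that total equality forces $\CZ^M\big(\Pi(w_+)\big)=\CZ^M\big(\Pi(w_-)\big)$, whence by the argument of Proposition \ref{prop:p_0_counts_differentials_in_a_fiber} the projected trajectory lies in a zero-dimensional moduli space, is therefore constant, the original trajectory is contained in a single fiber, and consequently $A=B$ after all (contradicting $A\ne B$). One also needs the minimality of the Chern number $-c\nu$ to exclude $k\ge2$ in this boundary case, which the paper does by deriving $\dim M=0$. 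So the correct architecture is: action inequality $\Rightarrow k\ge 1$ if $A\ne B$; index inequality plus $2c\nu\le-\dim M$ $\Rightarrow$ either an immediate contradiction or equality everywhere; equality everywhere $\Rightarrow$ fiber trajectory $\Rightarrow A=B$. Your proposal is missing the first and third of these steps.
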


\begin{proof}[Proof of Claim \ref{claim2}]
We compare action and Conley-Zehnder index of $\Pi\big([u,A],\eta\big)$ and $\Pi\big([v,B],\hat\eta\big)$. Using that the moduli space is non-empty we conclude
\beq
\a_f\big(\Pi\big([u,A],\eta\big)\big)=\om(A)-f(\wp(u))\geq\om(B)-f(\wp(v))=\a_f\big(\Pi\big([v,B],\hat\eta\big)\big)
\eeq
and
\bea\label{eqn:ineq_CZ_claim}
\CZ^M\big(\Pi\big([u,A],\eta\big)\big)&=-\Morse\big(\wp(u),f\big)+\tfrac12\dim M+2c_1^{TM}(A)\\
&\geq-\Morse\big(\wp(v),f\big)+\tfrac12\dim M+2c_1^{TM}(B)\\
&=\CZ^M\big(\Pi\big([v,B],\hat\eta\big)\big)\;.
\eea
We assume now that $A\neq B$. We recall that $f(M)\subset(0,1)$. Thus, the first inequality simplifies to
\beq
\om(A)>\om(B)
\eeq
since $\om\big(\pi_2(M)\big)=\nu\Z$. From $c_1^{TM}=c\om$ with $c<0$ we conclude then $c_1^{TM}(A)<c_1^{TM}(B)$. The minimal Chern number of $M$ equals $-c\nu$. Thus, we have
\beq\label{eqn:inequ_in_claim}
c_1^{TM}(A)\leq c_1^{TM}(B)+c\nu
\eeq
and from \eqref{eqn:ineq_CZ_claim}
\bea
2c_1^{TM}(A)&\geq-\Morse\big(\wp(v),f\big)+\Morse\big(\wp(u),f\big)+2c_1^{TM}(B)\\
&\geq-\dim M+2c_1^{TM}(B)\;.
\eea
We conclude that
\beq
2c\nu\geq-\dim M\;.
\eeq
In case $2c\nu<-\dim M$ we arrive at a contradiction. It remains to treat the case $2c\nu=-\dim M$. In this case we claim that the inequality \eqref{eqn:inequ_in_claim} necessarily becomes the equality
\beq
c_1^{TM}(A)=c_1^{TM}(B)+c\nu\;.
\eeq
Otherwise \eqref{eqn:inequ_in_claim} is actually of the form $c_1^{TM}(A)\leq c_1^{TM}(B)+2 c\nu$ since $-c\nu$ is the minimal Chern number of $M$. As above this implies then that $4cv\geq-\dim M$, i.e.~$2\dim M\leq\dim M$, and thus $\dim M=0$. I.e.~we are left with the case $\Sigma=S^1\subset\C=E$ in which Theorem \ref{thm:main} is true: $\RFH_*(S^1,\C)=0$,  \cite{Cieliebak_Frauenfelder_Restrictions_to_displaceable_exact_contact_embeddings,Albers_Frauenfelder_Leafwise_intersections_and_RFH}

We combine $c_1^{TM}(A)=c_1^{TM}(B)+c\nu$ with \eqref{eqn:ineq_CZ_claim} and arrive at
\beq
\Morse\big(\wp(v),f\big)-\Morse\big(\wp(u),f\big)\geq-2c\nu=\dim M
\eeq
which turns the inequality \eqref{eqn:ineq_CZ_claim} into an equality:
\beq
\CZ^M\big(\Pi\big([u,A],\eta\big)\big)=\CZ^M\big(\Pi\big([v,B],\hat\eta\big)\big)\;.
\eeq
Now we proceed as in the proof of Proposition \ref{prop:p_0_counts_differentials_in_a_fiber} in order to conclude that all element in $\M\Big(\big([v,B],\hat\eta\big),\big([u,A],\eta\big)\Big)$ are actually differentials which are entirely contained in fibers of $E$ and thus $A=B$, again by Proposition \ref{prop:p_0_counts_differentials_in_a_fiber}.
\end{proof}
We recall that we split the cycle $\xi$ as 
\beq
\xi=\sum_{A\in\Gamma_M}\xi_A\;.
\eeq 
It follows from the claim \ref{claim2} and $\p\xi=0$ that
\beq
\p \xi_A=0\quad\forall A\in\Gamma_M\;.
\eeq
Observe that for every $A$ the sum 
\beq
\xi_A=\sum_{\substack{w\in\c_k \\ [\Pi(w)]\in\Crit (f)\times\{A\}}}a_ww
\eeq
is finite. Indeed, we know that $w$ is of the form $w=([u,A],\eta)$ with fixed $A$ and $u$ being an $l$-fold cover of a simple Reeb orbit over a critical point of $f$. Moreover, the index of $w$ is fixed: $\mu(w)=k$. Therefore, Definition \ref{def:index_of_critical_points} of the index $\mu$ and the index formula Lemma \ref{lemma:index_of_fiber} allow only for finitely many combinations. The number of possibilities is bounded by $\tfrac12\dim M$. In particular, the number of possibilities does \emph{not} depend on $A$.

Now, we apply again the inductive procedure \eqref{eqn:inductive_equation} from the proof in case $c\geq0$ to obtain $\theta_A\in\RFC_{k+1}(\A_f^\tau)$ with
\beq
\p\theta_A=\xi_A\;.
\eeq
If we set
\beq
\theta:=\sum_{A\in\Gamma_M}\theta_A
\eeq
then claim \ref{claim2} implies $\p\theta=\xi$. As above it remains to check that $\theta$ satisfies the Novikov condition \eqref{eqn:Novikov_condition}. For this we express for some $A$
\beq
\theta_A=\sum_{\substack{z\in\c_{k+1} \\ [\Pi(z)]\in\Crit (f)\times\{A\}}}b_zz\;.
\eeq
The same argument we used to conclude that each $\xi_A$ is a finite sum gives the same for $\theta_A$. Moreover, since $\mu(\theta_A)=k+1$, $\p\theta_A=\xi_A$ again the claim, the index formula Lemma \ref{lemma:index_of_fiber} and the computation of the action \eqref{eq:computation_of_action} implies that exists $C>0$ such that \beq
|\A_f^\tau(z)-\A_f^\tau(w)|\leq C
\eeq
whenever $\M(w,z)\neq\emptyset$ for some $w$ appearing in $\xi_A$ and $z$ in $\theta_A$. The constant $C$ does \emph{not} depend on $A$, indeed we may choose $C=\tfrac\tau2\dim M+\max f-\min f$.

Thus, $\xi$ satisfying the Novikov condition implies that $\theta$ satisfies the Novikov condition since their actions are of bounded distance. This completes the proof.
\end{proof}

\begin{proof}[Proof of Theorem \ref{thm:main} for $\om\big(\pi_2(M)\big)=0$]
We follow the proof of the case $2c\nu\leq-\dim M$. We first establish claim \ref{claim2}, i.e.~that $\M\big(\big([v,B],\hat\eta\big),\big([u,A],\eta\big)\big)\neq\emptyset$ implies $A=B\in\Gamma_M$ holds without assuming $c_1^{TM}=c\om$ under the assumption that the Morse function $f:M\to\R$ is sufficiently small.
 
We assume otherwise. Then we find a sequence $\epsilon_n\to 0$ and a sequence of elements $w_n\in\M\big(\big([v_n,B_n],\hat\eta_n\big),\big([u_n,A_n],\eta_n\big)\big)$ where $w_n$ satisfies the Floer equation for $\A_{f_n}^\tau$ with $f_n:=\epsilon_nf$. By definition of $\M$ we have $c_1^{TE}(-A_n\#w_n\#B_n)=0$. Since $\om\big(\pi_2(M)\big)=0$ we can identify $c_1^{TE}=c_1^{TM}:\pi_2(E)\cong\pi_2(M)\to\Z$. We recall that $\Pi(w_n)$ are solutions of the Floer equation of $\a_{f_n}$ with 
\beq
c_1^{TM}(-A_n\#\Pi(w_n)\#B_n)=c_1^{TE}(-A_n\#w_n\#B_n)=0\;.
\eeq
We point out that the Floer cylinders $\Pi(w_n)$ topologically form spheres since their asymptotic limits lie in $\Crit(f_n)$. Therefore, we can rewrite
\beq
c_1^{TM}(-A_n\#\Pi(w_n)\#B_n)=-c_1^{TM}(A_n)+c_1^{TM}(\Pi(w_n))+c_1^{TM}(B_n)=0\;.
\eeq
Moreover, they have uniformly bounded energy since their energy is given by the action difference of $\a_{f_n}$ which, in turn, is bounded by $\max f_n-\min f_n$ thanks to our assumption $\om\big(\pi_2(M)\big)=0$. Therefore, we can take the Floer-Gromov limit of $\Pi(w_n)$. Floer-Gromov compactness implies that we find a bubble tree of holomorphic spheres in $(M,\om)$. Since we assume that $\om\big(\pi_2(M)\big)=0$ all holomorphic spheres are constant and therefore
\beq
c_1^{TM}(\Pi(w_n))=0
\eeq
for sufficiently large $n$ from which we conclude
\beq
c_1^{TM}(A_n)=c_1^{TM}(B_n)
\eeq
for sufficiently large $n$. That is, the above claim indeed holds for sufficiently small $f:M\to\R$. We now can proceed as in the proof of the case $2c\nu\leq-\dim M$. 
\end{proof}

\begin{Rmk}
In the latter two cases of the proof of Theorem \ref{thm:main} we assume that the auxiliary Morse function $f$ is very small. This is an echo of the `true' proof of Theorem \ref{thm:main} in the full Morse-Bott setting, i.e.~the case of $\A_{f=0}^\tau$. Indeed, in both cases $2c\nu\leq-\dim M$ and  $\om\big(\pi_2(M)\big)=0$ the Morse-Bott differential is of the form $\p=\p_0+$ auxiliary Morse trajectories which immediately implies the Theorem.
\end{Rmk}

\section{A conjectural explanation}\label{sec:conj_explanation}

Let $V$ be a Liouville domain, i.e.~a compact exact symplectic manifold with contact type boundary. We recall one of the main theorems by Cieliebak-Frauenfelder-Oancea in \cite{Cieliebak_Frauenfelder_Oancea_Rabinowitz_Floer_homology_and_symplectic_homology}.  There is a long exact sequence between symplectic (co-)homology $\SH$ and Rabinowitz Floer homology $\RFH$ as follows.
\beq\label{eq:CFO_les}
\cdots\pf \SH^{-*}(V)\pf\SH_*(V)\pf\RFH_*(\p V,V)\pf\SH_{-*-1}(V)\pf\cdots\;.
\eeq
Moreover, the map $\SH^{-*}(V)\to\SH_*(V)$ splits as
\beq
\xymatrix{\SH^{-*}(V)\ar[d]\ar[rr]&&\SH_*(V)\\
\H^{-*+d}(V,\p V)\ar[r]^-{\mathrm{PD}}&\H_{*+d}(V)\ar[r]^-{\mathrm{incl}_*}&\H_{*+d}(V,\p V)\ar[u]
}
\eeq
where $\mathrm{PD}$ denotes Poincare duality and $d=\tfrac{1}{2}\dim V$. As observed by Ritter in \cite{Ritter_Topological_quantum_field_theory_structure_on_symplectic_cohomology} this long exact sequence together with the fact that $\SH$ is a ring with unity leads to the statement
\beq\label{eq:equivalence_of_vanishing}
\SH^*(V)=0\quad\Longleftrightarrow\quad\SH_*(V)=0\quad\Longleftrightarrow\quad\RFH_*(\p V,V)=0\;.
\eeq
Note that our (co-)homology and grading conventions match with the ones in \cite{Cieliebak_Frauenfelder_Oancea_Rabinowitz_Floer_homology_and_symplectic_homology}.

In \cite{Oancea_Fibered_symplectic_cohomology_and_the_Leray_Serre_spectral_sequence} Oancea proves $\SH_*(E)=0$ for negative line bundles $\wp:E\to M$ under the condition that $(E,\Om)$ is symplectically aspherical. We point out that even in the symplectically aspherical case $E$ is not a Liouville manifold since $[\Om]\neq0\in\H^2(E)$. Ritter computes in \cite[Theorem 1]{Ritter_Floer_theory_for_negative_line_bundles_via_Gromov_Witten_invariants} for more general negative line bundles
\beq\label{eqn:Ritter}
\SH_*(E)\cong\QH_{*+d}(E,\Sigma)/\ker r^k
\eeq
where $d=\tfrac{1}{2}\dim E$, $\QH_*(E,\Sigma)$ is the relative quantum homology of the disk bundle inside $E$ with boundary $\Sigma$ and $r:\QH_*(E)\to\QH_{*-2}(E)$ is the map given by quantum intersection product with $\mathrm{PD}(\wp^*c_1^E)\in\QH_{2d-2}(E)$. Finally \eqref{eqn:Ritter} holds for any  $k\geq \dim\H_*(M)$. In particular, 
\beq
\SH_*(E)=0\quad\Longleftrightarrow\quad \wp^*c_1^E \text{ is nilpotent in }\QH^*(E)\;
\eeq
which generalizes Oancea's computation. Considering for instance the bundle $\wp:\O(-n)\to\CP^m$ it follows that 
\beq
\SH_*(\O(-n))\neq0\;,
\eeq 
for $n\leq m$, see \cite[Section 1.5]{Ritter_Floer_theory_for_negative_line_bundles_via_Gromov_Witten_invariants}. On the other hand, Theorem \ref{thm:main} applies since $(\CP^m,n\om_\mathrm{FS})$ is monotone with $c=\tfrac{m+1}{n}$ with corresponding negative line bundle $\O(-n)$, i.e.~we conclude
\beq
\RFH_*(\Sigma,\O(-n))=0\;.
\eeq
In this case $\Sigma$ is a Lens space. This is, of course, no contradiction to \eqref{eq:equivalence_of_vanishing} since the space $\O(-n)$ is \emph{not} a Liouville manifold. Also, $(\CP^m,n\om_\mathrm{FS})$ is not symplectically aspherical.

We offer the following conjectural explanation of Ritter's result \eqref{eqn:Ritter} in terms of the long exact sequence \eqref{eq:CFO_les} from \cite{Cieliebak_Frauenfelder_Oancea_Rabinowitz_Floer_homology_and_symplectic_homology} and Theorem \ref{thm:main}. We claim that the long exact sequence \eqref{eq:CFO_les} remains valid for negative line bundles $E$ (and probably even more generally) but the splitting of the map $\SH^{-*}(V)\to\SH_*(V)$ needs to be corrected as follows.
\beq
\xymatrix{\SH^{-*}(E)\ar[d]\ar[rr]&&\SH_*(E)\ar[r]&\RFH_*(\Sigma,E)\\
\QH^{-*+d}(E,\Sigma)\ar[r]^-{\mathrm{PD}}&\QH_{*+d}(E)\ar[r]^-{\mathrm{incl}_*}&\QH_{*+d}(E,\Sigma)\ar[u]_{c_*}
}
\eeq
Here, as in \cite{Ritter_Floer_theory_for_negative_line_bundles_via_Gromov_Witten_invariants}, we identify $\QH_*(E,\Sigma)$ as Floer homology of a Hamiltonian with very small slope at infinity or equivalently as symplectic homology in the action window $(-\varepsilon,\varepsilon)$. Then $c_*$ is just a continuation homomorphism induced by a canonical inclusion map. We refer to \cite{Ritter_Floer_theory_for_negative_line_bundles_via_Gromov_Witten_invariants} for details. In particular, if $\RFH_*(\Sigma,E)=0$ then the map $c_*$ is surjective and
\beq
\SH_*(E)\cong\QH_{*+d}(E,\Sigma)/\ker c_*\;.
\eeq
Ritter's important observation in  \cite{Ritter_Floer_theory_for_negative_line_bundles_via_Gromov_Witten_invariants}  is that $c_*$ is indeed surjective and can be identified with $r^k$ for large $k$ under his assumptions.

As mentioned above Ritter's and the present result holds for the bundle $\O(-n)\to\CP^m$. In fact, from inspection of Ritter's article \cite{Ritter_Floer_theory_for_negative_line_bundles_via_Gromov_Witten_invariants} it seems that Theorem \ref{thm:main} applies to all examples Ritter considers.

%
%%%%%%%%%%%%%%%%%%%%%%%%%%%%%%%%%%%%%%%%%%%%%%%%%%%%%%%%%%%%%%%%%%%%%%%%%%%%%%%%%%%%%%%%%%%%%%%%%%%%
%%%%%%%%%%%%%%%%%%%%%%%%%%%%%%%%%%%%%%%%%%%%%%%%%%%%%%%%%%%%%%%%%%%%%%%%%%%%%%%%%%%%%%%%%%%%%%%%%%%%
%%%%%%%%%%%%%%%%%%%%%%%%%%%%%%%%%%%%%%%%%%%%%%%%%%%%%%%%%%%%%%%%%%%%%%%%%%%%%%%%%%%%%%%%%%%%%%%%%%%%

\end{document}